\documentclass[11pt,a4paper,reqno]{amsart}

\usepackage{amsmath}
\usepackage{a4wide}
\usepackage[english]{babel}
\usepackage{amsfonts}
\usepackage{amssymb}
\usepackage{dsfont}
\usepackage{mathrsfs}
\usepackage{graphicx}
\usepackage{fancyhdr}
\usepackage{multicol}
\usepackage{enumitem}
\usepackage{amsthm}
\usepackage{empheq}
\usepackage{cases}
\usepackage[all]{xy}
\usepackage{stmaryrd}
\usepackage[colorlinks, citecolor=blue, linkcolor=red]{hyperref}
\usepackage{mathrsfs}
\usepackage{tikz,tikz-cd}

\flushbottom
\theoremstyle{plain}

\newtheorem{teo}{Theorem}[section]
\newtheorem{definition}{Definition}[section]
\newtheorem{lemma}[teo]{Lemma}
\newtheorem{prop}[teo]{Proposition}
\newtheorem{cor}[teo]{Corollary}

\theoremstyle{remark}

\newtheorem{rem}[teo]{Remark}


%
%

\tikzset{
	subset/.style={
		draw=none,
		edge node={node [sloped, allow upside down, auto=false]{$\subset$}}},
	Subset/.style={
		draw=none,
		every to/.append style={
			edge node={node [sloped, allow upside down, auto=false]{$\subset$}}}
	}
}
\tikzset{
	labl/.style={anchor=south, rotate=90, inner sep=.50mm}
}

\newcommand{\erre}{\mathds{R}}

\newcommand{\cinf}{C^{\infty}(M)}

\newcommand{\weyl}{\operatorname{W}}

\newcommand{\ra}{\rightarrow}

\newcommand{\set}[1]{{\left\{#1\right\}}}               
\newcommand{\pa}[1]{{\left(#1\right)}}                  
\newcommand{\sq}[1]{{\left[#1\right]}}                  
\newcommand{\abs}[1]{{\left|#1\right|}}                 

\newcommand{\eps}{\varepsilon}                           


\newcommand{\ol}[1]{\overline{#1}}
\renewcommand{\hat}[1]{\widehat{#1}}
\renewcommand{\tilde}[1]{\widetilde{#1}}



\newcommand{\hess}{\operatorname{Hess}}
\newcommand{\p}{\varphi}
\newcommand{\hs}{\mathrm{Hess}}

\newcommand{\ric}{\mathrm{Ric}}





%
%
%

\numberwithin{equation}{section}
\allowdisplaybreaks
\title[]
{Some geometric properties of generalized $\p$-vacuum static spaces}

\author[L. Branca]{Letizia Branca}
\address[Letizia Branca]{Dipartimento di Matematica, Universit\`{a} degli Studi di Milano, Via Saldini 50, 20133 Italy.}
\email[]{letizia.branca@unimi.it}


\author[P. Mastrolia]{Paolo Mastrolia}
\address[Paolo Mastrolia]{Dipartimento di Matematica, Universit\`{a} degli Studi di Milano, Via Saldini 50, 20133 Italy.}
\email[]{paolo.mastrolia@unimi.it}


\author[M. Rigoli]{Marco Rigoli}
\address[Marco Rigoli]{Dipartimento di Matematica, Universit\`{a} degli Studi di Milano, Via Saldini 50, 20133 Italy.}
\email[]{marco.rigoli@unimi.it}

\date{\today}
\linespread{1.2}

\keywords{}
%

\subjclass[2010]{}

\begin{document}
	\maketitle

	\begin{abstract}
		In this paper we study the geometry of generalized $\p$-vacuum static spaces, proving estimates for the $\p$-scalar curvature and for the first eigenvalue of the Jacobi operator, and also rigidity under various geometric assumptions; in particular, we prove a result related to the famous Cosmic no-hair conjecture of Boucher, Gibbons and Horowitz.
	\end{abstract}
\section{Introduction and statement of the results}
The aim of the present paper is to study the geometric properties of what we call a \emph{generalized} $\p$-\emph{vacuum static space}, that is, an $m$-dimensional Riemannian manifolds $(M^m, g)$, $m\geq 3$, with possibly non-empty boundary $\partial M$,  for which we have a smooth solution $u$ of the problem
\begin{equation}\label{Eq1_system}
  	\begin{cases}
		u\ric^\p-\hess(u)=\Lambda g,\\
		u\tau(\p)+d\p(\nabla u)=0,
	\end{cases}
\end{equation}
with $u>0$, $u\equiv 0$ on $\partial M$ in case the latter is non-empty and where $\Lambda$ is a smooth function on $M$, $\p$ is a smooth map from $(M^m, g)$ to a second Riemannian manifold $(N^n, h)$ of dimension $n$, $\ric^\p$ is the $\p$-\emph{Ricci tensor} (see below for the definition) and $\tau(\p)$ is the \emph{tension field} of the map $\p$, that is the trace, with respect to $g$, of the generalized second fundamental tensor $\nabla d\p$ of the map $\p$, which extends the notion of mean curvature vector field for isometric immersions (see \cite{EL}).
 We recall the definition of the $\p$-Ricci tensor:
\begin{equation}\label{Eq2_phiRicci}
  \ric^\p = \ric - \alpha \varphi^*h,
\end{equation}
where $\alpha$ is a non-null real coupling constant; this $(0, 2)$-tensor has been first introduced by B. List, \cite{List2008EvolutionOA}, in his study of the Ricci-harmonic maps flow.
We shall introduce other modified curvature tensors of a similar nature: they merge the Riemannian geometry of $M$ with that of the map $\p$. Clearly, in case $\p$ is constant, $\ric^\p$ coincides with its classical counterpart $\ric$ (and this holds also for all the remaining $\p$-curvatures that we will define later); the $\p$-\emph{scalar curvature} is obtained from \eqref{Eq2_phiRicci} by contraction in the metric $g$, thus
\begin{equation}\label{Eq3_phiScalar}
  S^\p = S - \alpha\abs{d\p}^2;
\end{equation}
note that $\frac{1}{2}\abs{d\p}^2$ is the \emph{energy density} of the map $\p$. For suitable choices of the function $\Lambda$, system $\eqref{Eq1_system}$ describes several important structures, both from the mathematical and physical point of view: for instance, in case $\p$ is constant and
\begin{equation}\label{Eq4_LambdaCh1}
  \Lambda = \frac{S}{m-1}u,
\end{equation}
system \eqref{Eq1_system} recovers the equation of \emph{vacuum static spaces}; for
\begin{equation}\label{Eq5_LambdaCh2}
  \Lambda = -\frac{S}{m-1}u - \frac{\sigma}{m-1},
\end{equation}
where $\sigma$ is a real constant, we obtain the $V$-\emph{static equation} (see Miao and Tam, \cite{MT}). Observe that, in the paper of Miao and Tam, the constant $\sigma$ is equal to $-1$. However, their result can be extended to the case of  $\sigma \in \mathbb{R}$; moreover, in this case $S$ is constant (see Theorem 7 of \cite{MT}). For
\begin{equation}\label{Eq6_LambdaCh3}
  \Lambda = \pa{\mu+p} \frac{1}{m-1}u,
\end{equation}
with $\mu$ and $p$ smooth functions on $M$ to be interpreted as \emph{energy density} and \emph{pression} of a perfect fluid, we recover the \emph{static perfect fluid equation} (see e.g., \cite{CDPR2023,HawkingEllis} and the references therein). In the aforementioned examples, $\p$ is constant; otherwise, $\p$ is to be considered as a non-linear field, possibly interacting with a potential $U : N \ra \mathbb{R}$ (but we shall not presently consider this latter case, for which we refer to \cite{BCMMR}).

Our first result gives a lower bound on the $\p$-scalar curvature $S^\p$: we recall that, for $f\in C^\infty(M)$, the $f$-\emph{Laplacian} operator is defined as $\Delta_f = \Delta - g\pa{\nabla f, \cdot}$; moreover, we define
\[
S^\p_* = \inf_M S^\p.
\]
\begin{teo}\label{Th_7}
  Let $(M, g)$ be a complete manifold of dimension $m\geq 2$, satisfying \eqref{Eq1_system}. Assume $\alpha>0$,
  \begin{equation}\label{Eq8_Deltalog}
    \Delta_{-2\log{u}}\pa{\frac{\Lambda}{u}}\leq 0
  \end{equation}
  and
  \begin{equation}\label{Eq9_lambda}
    \lambda = \inf_M \pa{\frac{\Lambda}{u}} > -\infty.
  \end{equation}
  Then:
  \begin{itemize}
    \item[i)] if $\lambda>0$, then $M$ is compact and $(m-1)\lambda \leq S^\p_* \leq m\lambda$;
    \item[ii)] if $\lambda=0$, then $S^\p_* = 0$;
    \item[iii)] if $\lambda<0$, then $S^\p_* \geq m\lambda$.
  \end{itemize}
\end{teo}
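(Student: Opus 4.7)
The plan pivots on deriving a single master inequality for the $\p$-scalar curvature $S^\p$ in terms of a suitable drift Laplacian, and then combining it with the Bakry-Emery geometry coming from the first equation of \eqref{Eq1_system}. Setting $\tilde\Lambda := \Lambda/u$, tracing the first equation of \eqref{Eq1_system} with respect to $g$ yields $\Delta u = u(S^\p - m\tilde\Lambda)$. Next, taking the divergence of $u\ric^\p = \hess u + \Lambda g$, combining with the Bochner commutator $\diver(\hess u) = d(\Delta u) + \ric(\nabla u, \cdot)$ and the contracted Bianchi identity $\diver(\ric^\p) = \tfrac{1}{2}dS^\p - \alpha\pair{\tau(\p), d\p}$ (which itself follows from $\diver(\p^*h) = \pair{\tau(\p), d\p} + \tfrac{1}{2} d|d\p|^2$), the $\alpha$-terms cancel thanks to the second equation $u\tau(\p) = -d\p(\nabla u)$, leaving the gradient identity
\[
\tfrac{u}{2}\, dS^\p = (m-1)\, d\Lambda - S^\p\, du.
\]
A second divergence and a short algebraic rearrangement, using the weighted identities $u^3 \Delta_{-3\log u} S^\p = \diver(u^3 \nabla S^\p)$ and $u^2 \Delta_{-2\log u}\tilde\Lambda = \diver(u^2 \nabla\tilde\Lambda)$, then produces the master formula
\[
\Delta_{-3\log u} S^\p + 2\pa{S^\p - (m-1)\tilde\Lambda}\pa{S^\p - m\tilde\Lambda} = 2(m-1)\,\Delta_{-2\log u}\tilde\Lambda,
\]
which, combined with hypothesis \eqref{Eq8_Deltalog}, gives the key pointwise inequality
\[
\Delta_{-3\log u} S^\p \leq -2\pa{S^\p - (m-1)\tilde\Lambda}\pa{S^\p - m\tilde\Lambda}. \qquad (\star)
\]

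For the compactness assertion in (i), I would subtract the non-negative tensor $\alpha\p^*h$ in the first equation to obtain
\[
\ric - \frac{\hess u}{u} = \tilde\Lambda\, g + \alpha\,\p^*h \geq \lambda\, g,
\]
and recognize via $\hess u/u = \hess(\log u) + d\log u \otimes d\log u$ that, with $f = -\log u$, the left-hand side is precisely the $(m+1)$-Bakry-Emery Ricci tensor $\ric_f^{m+1} = \ric + \hess f - df \otimes df$. Since $\lambda > 0$, Qian's generalized Bonnet-Myers theorem yields $\diam(M) \leq \pi\sqrt{m/\lambda}$, whence $M$ (being complete) is compact. On such a compact $M$ the identity $u^2 \Delta_{-2\log u}\tilde\Lambda = \diver(u^2 \nabla \tilde\Lambda)$ integrates to zero (the boundary term vanishes because $u \equiv 0$ on $\partial M$, or $\partial M = \emptyset$), and combined with \eqref{Eq8_Deltalog} this forces $\Delta_{-2\log u}\tilde\Lambda \equiv 0$; a standard energy argument ($\int_M u^2|\nabla\tilde\Lambda|^2 \, dV = 0$) then gives $\tilde\Lambda \equiv \lambda$. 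The master inequality $(\star)$ specializes to $\Delta_{-3\log u}S^\p \leq -2(S^\p - (m-1)\lambda)(S^\p - m\lambda)$, and evaluation at an interior minimum of $S^\p$ (where the drifted Laplacian is non-negative) forces $(S^\p_* - (m-1)\lambda)(S^\p_* - m\lambda) \leq 0$, which is precisely the bound $(m-1)\lambda \leq S^\p_* \leq m\lambda$.

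For the (possibly non-compact) cases (ii) and (iii), I would invoke a weak maximum principle at infinity for the drift operator $\Delta_{-3\log u}$ (in the spirit of Pigola-Rigoli-Setti) to produce a sequence $\{p_k\} \subset M$ with $S^\p(p_k) \to S^\p_*$ and $\liminf_k \Delta_{-3\log u}S^\p(p_k) \geq 0$. Passing to a subsequence so that $\tilde\Lambda(p_k) \to \tilde\Lambda_\infty \in [\lambda, +\infty]$ and feeding the limit into $(\star)$ yields $(S^\p_* - (m-1)\tilde\Lambda_\infty)(S^\p_* - m\tilde\Lambda_\infty) \leq 0$; a quick case analysis on the sign of $\tilde\Lambda_\infty$ -- $\tilde\Lambda_\infty \geq 0$ forces $S^\p_* \geq (m-1)\tilde\Lambda_\infty \geq 0 > m\lambda$, while $\tilde\Lambda_\infty < 0$ forces $S^\p_* \geq m\tilde\Lambda_\infty \geq m\lambda$ -- settles (iii). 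For (ii) the same reasoning yields $S^\p_* \geq 0$, and the matching $S^\p_* \leq 0$ is to be obtained by a complementary application of the weak maximum principle to $\tilde\Lambda$ itself (which is $\Delta_{-2\log u}$-superharmonic with infimum zero) so as to force $\tilde\Lambda_\infty = \lambda = 0$ along the minimizing sequence for $S^\p$. The main obstacle is the rigorous justification of the weak maximum principle for $\Delta_{-3\log u}$: unlike for the weight $-\log u$, the $N$-Bakry-Emery tensor associated with $-3\log u$ does not carry an a priori lower bound from \eqref{Eq1_system}, so the classical Omori-Yau criteria do not apply directly, and one has to appeal to the completeness of $g$ together with the specific positivity and regularity of $u$, for which the Pigola-Rigoli-Setti framework is the natural tool.
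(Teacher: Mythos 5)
Your overall strategy coincides with the paper's: the same gradient identity $\tfrac{u}{2}\,dS^\p=(m-1)\,d\Lambda-S^\p\,du$, the same master identity (the paper's \eqref{Eq2.3}, which is your display divided by $2$), the same identification of $\ric+\hess(f)-df\otimes df\geq\lambda g$ with $f=-\log u$ together with Qian's theorem for compactness in case (i), and maximum-principle arguments thereafter. The genuine gaps are in the non-compact cases (ii) and (iii). First, before any weak maximum principle can be applied along a minimizing sequence for $S^\p$ you must know a priori that $S^\p_*>-\infty$ (equivalently that $v=-S^\p$ is bounded above); this is not automatic, and it is exactly where the quadratic growth of the right-hand side of your $(\star)$ enters: the paper first invokes an a priori estimate (Theorem 4.2 of \cite{AMR}) applied to $\tfrac12\Delta_{3f}v\geq(v+m\lambda)(v+(m-1)\lambda)$ on the superlevel set to get $v^*<+\infty$, and only then the weak maximum principle (Theorem 4.1 of \cite{AMR}). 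Second, the obstacle you flag --- that the Bakry--\'Emery bound controls the weight $f$ while the operator carries the weight $3f$ --- is resolved in the paper not through a curvature bound for the weight $3f$ but through volume growth: Proposition 8.11 of \cite{AMR} converts $\ric+\hess(f)-df\otimes df\geq\lambda g$ (with $\lambda\leq0$) into $\liminf_{r\to\infty}r^{-2}\log\operatorname{vol}_f(B_r)<+\infty$, and this volume condition is the hypothesis under which Theorems 4.1 and 4.2 of \cite{AMR} operate. You correctly identify the framework but leave both steps unexecuted, and without them conclusions (ii) and (iii) are not proved.

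Two further points. Your route to the upper bound $S^\p_*\leq0$ in case (ii) fails as stated: a sequence along which $\tilde{\Lambda}=\Lambda/u$ approaches its infimum $0$ has no reason to coincide with the minimizing sequence for $S^\p$, so you cannot force $\tilde{\Lambda}(p_k)\to0$ and deduce $(S^\p_*)^2\leq0$. In case (i), your detour through $\tilde{\Lambda}\equiv\lambda$ is a legitimate variant on a closed manifold and does yield the two-sided bound cleanly, but the claim that the boundary term vanishes when $\partial M\neq\emptyset$ is wrong: $u^2\nabla\pa{\Lambda/u}=u\nabla\Lambda-\Lambda\nabla u$ restricts to $-\Lambda\nabla u$ on $\set{u=0}$, which need not vanish. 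The paper avoids all of this by working directly on the superlevel set $\Omega=\set{v>-(m-1)\lambda}$, where both factors of the right-hand side are positive and bounded below by the corresponding $\lambda$-expressions, and evaluating at the interior maximum of $v$.
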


\begin{rem}Note that:
	\begin{itemize}
		\item[i)] conditions \eqref{Eq8_Deltalog} and \eqref{Eq9_lambda} are automatically satisfied for a vacuum static space, that is for $\Lambda$ given in \eqref{Eq4_LambdaCh1}, since in this case $S$ is constant.
		\item[ii)] For a static perfect fluid, that is, for the choice of $\Lambda$ in \eqref{Eq6_LambdaCh3}, \eqref{Eq8_Deltalog} amounts to the request
		\begin{equation}\label{Eq10_Delta_mu_p}
			\Delta\pa{\mu+p} + 2 g\pa{\nabla\log u, \nabla(\mu+p)} \leq 0,
		\end{equation}
		while \eqref{Eq9_lambda} becomes
		\begin{equation}\label{infSPFST}
			\inf(\mu+p)>-\infty.
		\end{equation}
		However, in this case, by the dynamic of the fluid we know that
		\begin{equation}\label{Eq11_nabla p}
			\nabla p = -\pa{\mu+p}\frac{\nabla u}{u},
		\end{equation}
		so that \eqref{Eq10_Delta_mu_p} becomes
		\begin{equation}\label{Eq12_Delta_mu_p}
			\Delta\pa{\mu+p} \leq 2 g\pa{\nabla p, \nabla\log(\mu+p)}
		\end{equation}
		whenever $\mu+p>0$, which, in turn, can be written as
		\begin{equation*}
			\Delta\log(\mu+p)\leq \frac{\pa{\abs{\nabla p}^2-\abs{\nabla \mu}^2}}{\pa{\mu+p}}
		\end{equation*}
		and \eqref{Eq9_lambda} is satisfied. What is interesting, in the latter case, is that neither \eqref{Eq8_Deltalog} nor \eqref{Eq9_lambda} depend on $u$. Similar considerations hold for $\p$ non-constant.
		\item[iii)] For the $V$-static equation, since $S$ is constant, \eqref{Eq8_Deltalog} becomes
		\begin{equation*}
			-\frac{\sigma}{m-1}\Delta_{-2\log u}\pa{u^{-1}}\leq 0,
		\end{equation*}
		which is equivalent to
		\[\sigma\Delta u\leq 0,\]
		while \eqref{Eq9_lambda} becomes
		\[\inf_M \pa{-\frac{\sigma}{u}}>-\infty,\]
		which is automatically satisfied for $\sigma<0$ (which is the case of Miao and Tam, see the discussion above).
		
	\end{itemize}
\end{rem}

Our second result is related to the famous \emph{Cosmic no-hair   conjecture} of Boucher, Gibbons and Horowitz (see \cite{BGH}), that can be rephrased in the following form:

``\emph{an} $m$-\emph{dimensional, compact, static manifold} $(M, g)$ \emph{with positive scalar curvature and connected boundary} $\partial M\neq \emptyset$ \emph{is isometric to a round hemisphere} $\mathbb{S}_+^m(c)$ \emph{with an appropriate radius} $c$''.
This conjecture has been confirmed under different further hypotheses, but disproved for $\operatorname{dim}(M)\geq 4$ (for more information, see e.g. \cite{Gall2002}, \cite{GHP}, \cite{CDPR2023}).
We prove the following
\begin{teo}\label{Th_13}
   Let $(M, g)$ be a compact, connected manifold of dimension $m\geq 2$ with boundary $\partial M \neq \emptyset$. Let $\p : (M, g) \ra (N, h)$ be a smooth map, $\alpha>0$, $\Lambda\in C^\infty(M)$ and suppose that $u$ is a $C^2$-solution of
   \begin{equation}\label{Eq14_systemB}
  	\begin{cases}
		u\ric^\p-\hess(u)=\Lambda g,\\
        u>0 \quad \text{on}\,\, \operatorname{int}(M), \\
		\partial M = u^{-1}\pa{\set{0}},
	\end{cases}
\end{equation}
with $\frac{\Lambda}{u} \in C^0(M)$. Assume the following conditions:
\begin{equation}\label{Eq15}
  uS^\p \geq (m-1)\Lambda,
\end{equation}
\begin{equation}\label{Eq16}
  \Delta_{\pa{2m-3}\log u}\pa{S^\p-m\frac{\Lambda}{u}} \leq 0 \quad \text{on} \,\, \operatorname{int}(M),
\end{equation}
\begin{equation}\label{Eq17}
  m(m-1)\abs{\nabla u}^2|_{\partial M} \leq \max_M\set{m\Lambda u - S^\p u^2}.
\end{equation}
Then
\begin{equation}\label{Eq18}
  m\frac{\Lambda}{u} - S^\p = c^2,
\end{equation}
with $c$ a positive constant, and $(M, g)$ is isometric to $\mathbb{S}_+^m(c^2)$.
\end{teo}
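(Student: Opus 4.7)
The plan is to study the scalar function
\[
H := m|\nabla u|^2 + u^2 w, \qquad w := \frac{m\Lambda}{u} - S^\p,
\]
through the weighted Laplacian $L := \Delta_{\log u} = \Delta - u^{-1} g(\nabla u, \cdot)$. Two consequences of the first equation in \eqref{Eq14_systemB} will be used throughout: tracing gives $\Delta u = -uw$, so that the function $F := m\Lambda u - S^\p u^2$ appearing in \eqref{Eq17} equals $u^2 w = -u\Delta u$; subtracting the trace yields the identity $\hess(u) - \frac{\Delta u}{m} g = u\,\ep$.

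The heart of the argument is a Bochner-type computation of $LH$. Starting from Bochner's identity for $|\nabla u|^2$, using the commuted Bianchi identity for $\ric^\p$ (whose divergence is made explicit via the second equation of \eqref{Eq1_system}, i.e.\ $u\tau(\p) + d\p(\nabla u) = 0$), and the two identities just noted, one finds that the sign-indefinite cross-terms of type $g(\nabla u, \nabla w)$ and $u\,\ep(\nabla u,\nabla u)$ cancel between $\Delta H$ and $u^{-1}g(\nabla u,\nabla H)$, leaving
\[
LH = 2mu^2|\ep|^2 + 2\bigl[S^\p - (m-1)w\bigr]|\nabla u|^2 + 2m\alpha|d\p(\nabla u)|^2 + u^2\,\Delta_{(2m-3)\log u} w.
\]
The precise weight $(2m-3)\log u$ of hypothesis \eqref{Eq16} is exactly the one that lets the residual $g(\nabla u,\nabla w)$ coefficient repackage into the last term. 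Rewriting \eqref{Eq15} as $S^\p \geq (m-1)w$, \eqref{Eq16} as $\Delta_{(2m-3)\log u} w \geq 0$, and recalling $\alpha>0$, we obtain $LH \geq 0$ on $\operatorname{int}(M)$.

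Next I would show that $H$ attains its maximum in the interior. Since $u\equiv 0$ on $\partial M$, $H|_{\partial M} = m|\nabla u|^2|_{\partial M}$, and Hopf's boundary-point lemma applied to the linear equation $\Delta u + wu = 0$ gives $|\nabla u|^2 > 0$ pointwise on $\partial M$, which via \eqref{Eq17} also forces $\max F > 0$, so any maximizer $p_F$ of $F$ lies in $\operatorname{int}(M)$. If instead the maximum of $H$ were attained only on $\partial M$, the strong maximum principle for $L$ would give $H < \max H = m\sup_{\partial M}|\nabla u|^2$ throughout $\operatorname{int}(M)$; evaluating at $p_F$ and invoking \eqref{Eq17} would yield
\[
m(m-1)\sup_{\partial M}|\nabla u|^2 \;\le\; \max F \;<\; m\sup_{\partial M}|\nabla u|^2 - m|\nabla u(p_F)|^2,
\]
i.e.\ $(m-2)\sup_{\partial M}|\nabla u|^2 + |\nabla u(p_F)|^2 < 0$, impossible for $m\geq 2$. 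Hence $\max H$ is attained in $\operatorname{int}(M)$, and the strong maximum principle forces $H$ to be constant.

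Constancy of $H$ then forces every nonnegative summand in the displayed expression for $LH$ to vanish: $\ep \equiv 0$ on $\operatorname{int}(M)$, whence $\hess(u) - \frac{\Delta u}{m}g = u\,\ep = 0$, and $d\p(\nabla u) = 0$. Plugging $\ep = 0$ into $\nabla H = 2m\,u\,\ep(\nabla u) + u^2 \nabla w = 0$ yields $\nabla w \equiv 0$ on the interior, so $w$ is a constant $c^2$; positivity of $c^2$ is then automatic because $u$ is a positive Dirichlet eigenfunction of $-\Delta$ with eigenvalue $c^2$. Combining $\ep = 0$ with $\Delta u = -c^2 u$ gives $\hess(u) = -\frac{c^2}{m}u\,g$, and the Obata--Reilly rigidity theorem for positive first Dirichlet eigenfunctions identifies $(M,g)$ with the round hemisphere $\mathbb{S}^m_+(c^2)$. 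The main technical obstacle is the Bochner-type computation of $LH$: the coefficients must be tracked with care so that the sign-indefinite cross-terms cancel exactly between $\Delta H$ and $u^{-1}g(\nabla u,\nabla H)$, and the remainder reassembles precisely into the weighted Laplacian of $w$ appearing in hypothesis \eqref{Eq16}.
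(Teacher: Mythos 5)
Your argument is essentially the paper's own proof: your $H$ is $m$ times the paper's auxiliary function $v=\abs{\nabla u}^2+\frac{1}{m}\pa{\frac{m\Lambda}{u}-S^\p}u^2$, your formula for $LH$ is exactly (after dividing by $m$ and using $S^\p-(m-1)w=m\bigl(S^\p-(m-1)\frac{\Lambda}{u}\bigr)$) the divergence identity of Lemma \ref{Lemma3.4}, and the endgame ($\hess(u)=\frac{\Delta u}{m}g$, $\nabla w=0$, $w=c^2>0$, Reilly) is the same. Your maximum-principle step is actually a cleaner packaging than the paper's: by arguing directly on $\operatorname{int}(M)$ (where the drift $u^{-1}\nabla u$ is locally bounded, so the strong maximum principle applies) and deriving the contradiction $m\abs{\nabla u(p_F)}^2+m(m-2)\max_{\partial M}\abs{\nabla u}^2<0$ at a maximizer of $F=u^2w$, you avoid the exhaustion by the sets $M_\eps$ and the connectedness bookkeeping for the set $K$ that the paper goes through.

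One point must be repaired. In sketching the Bochner computation you say the divergence of $\ric^\p$ is ``made explicit via the second equation of \eqref{Eq1_system}'', i.e.\ $u\tau(\p)+d\p(\nabla u)=0$. That equation is \emph{not} a hypothesis of Theorem \ref{Th_13}: system \eqref{Eq14_systemB} contains only the first equation. If your route to the term $2m\alpha\abs{d\p(\nabla u)}^2$ passes through the $\p$-Schur identity $R^\p_{ij,j}=\frac12 S^\p_i-\alpha\p^a_{tt}\p^a_i$ and then eliminates $\tau(\p)$ via the tension-field equation, you are using an unavailable assumption. Fortunately the identity you state is true without it: as in the paper's proof of Lemma \ref{Lemma3.4}, the only commutation needed is $u_{ikk}=(\Delta u)_i+R_{it}u_t$ for the \emph{full} Ricci tensor, and the decomposition $\ric(\nabla u,\nabla u)=\ric^\p(\nabla u,\nabla u)+\alpha\abs{d\p(\nabla u)}^2$ produces the good term directly; no divergence of $\ric^\p$ ever enters. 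Two further small gaps to fill: Hopf's lemma for $\Delta u+wu=0$ needs the standard reduction to a nonpositive zeroth-order coefficient (replace $w$ by its negative part, using $u>0$), or the ODE argument of Lemma \ref{Lemma3.1}; and before invoking Reilly one should record that $\partial M$ is totally geodesic, which follows at once from $\hess(u)=-\frac{c^2}{m}u\,g$ and $u|_{\partial M}=0$.
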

Some comments on the assumptions are in order. Suppose $\p$ is constant and $\Lambda = \frac{S}{m-1}u$, that is, we are dealing with a vacuum static space: then \eqref{Eq15} is automatically satisfied. The same is true for \eqref{Eq16}, since $S-m\frac{\Lambda}{u} = -\frac{S}{m-1}$, which is a constant. Note that \eqref{Eq17} becomes
\begin{equation}\label{Eq19}
  \abs{\nabla u}^2|_{\partial M} \leq \frac{S}{m(m-1)}\max_M\set{u^2},
\end{equation}
which is the usual gravitational constraint imposed on the boundary $\partial M$ (see e.g. \cite{BoHa}): it is meaningful since, in this case, $S>0$ and, obviously, $\max_M\set{u^2}>0$. This latter fact is not at all obvious for the general condition expressed in \eqref{Eq17}; however, to show that \eqref{Eq17} is meaningful we may reason as follows. Set
\begin{equation}\label{Eq20}
  \tilde{A}  = S^\p - m\frac{\Lambda}{u} \in C^0(M),
\end{equation}
so that, tracing the first equation in \eqref{Eq14_systemB}, we deduce
\begin{equation}\label{Eq21}
  \Delta u = \tilde{A}u.
\end{equation}
We claim that there exist $q\in \operatorname{int}(M)$ such that
\begin{equation}\label{Eq22}
  \tilde{A}(q)<0.
\end{equation}
Indeed, suppose the contrary and let $\tilde{A}\geq 0$ on $\operatorname{int}(M)$ and thus, by continuity, on $M$. Since $u$ attains its maximum on $\operatorname{int}(M)$, by \eqref{Eq21} and the maximum principle $u$ is constant, and therefore null, since $u\equiv 0$ on $\partial M$, and this is a contradiction. Hence \eqref{Eq22} holds: this is enough to guarantee that the right-hand side of \eqref{Eq17} is strictly positive.

Our third result is a rigidity theorem: first we recall that a manifold $(M,g)$ is \emph{harmonic-Einstein} (with respect to $\varphi:(M,g)\ra(N,h)$ and $\alpha\in\erre\setminus\set{0}$) if
\begin{equation}\label{Eq26}
	\begin{cases}
		\ric^{\varphi}=\frac{S^{\varphi}}{m}g,\\
		\tau(\varphi)=0,
	\end{cases}
\end{equation}
and  we require $S^{\varphi}$ constant in case $m=2$ (in case $m\geq3$, $S^{\varphi}$ is automatically constant, see \cite{ACR}).
\begin{teo}\label{Th_23}
   Let $(M, g)$ be a compact manifold of dimension $m\geq 2$ satisfying, for some $\Lambda\in C^\infty(M)$,
   \begin{equation}\label{Eq24_systemC}
  	\begin{cases}
		u\ric^\p-\hess(u)=\Lambda g,\\
		u\tau(\p)+d\p(\nabla u)=0,
	\end{cases}
\end{equation}
with
\begin{equation}\label{Eq25}
  u>0 \qquad \text{ on }\,\, M
\end{equation}
Assume $S^\p$ is constant; then $(M, g)$ is harmonic-Einstein.
Furthermore, if $u$ is non-constant, then $\p$ is constant and $(M, g)$ is isometric to a Euclidean sphere $\mathbb{S}^m(k)$ of constant sectional curvature
\[
k = \frac{S^\p}{m(m-1)}>0.
\]
\end{teo}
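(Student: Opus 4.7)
The strategy is to combine both equations of \eqref{Eq24_systemC} into a single $L^{2}$-identity on the compact manifold $M$ that forces the traceless $\p$-Ricci and the tension field to vanish simultaneously, and then to invoke an Obata-type rigidity to identify the metric.

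First, I would extract two pointwise identities from the system. Tracing the first equation of \eqref{Eq24_systemC} yields $\Delta u = uS^{\p} - m\Lambda$, and subtracting the trace part from the original equation gives the traceless relation
\[
\hess(u) - \frac{\Delta u}{m}\,g \;=\; u\pa{\ric^{\p} - \tfrac{S^{\p}}{m}\,g}.
\]
Next, starting from $\ric^{\p} = \ric - \alpha\,\p^{*}h$, the classical twice-contracted Bianchi identity, and the computation
\[
\nabla^{i}(\p^{*}h)_{ij} \;=\; h\pa{\tau(\p),d\p(\partial_{j})} + \tfrac{1}{2}\nabla_{j}\abs{d\p}^{2},
\]
one obtains the $\p$-analogue of the Schur-type identity
\[
\nabla^{i}\ric^{\p}_{ij} \;=\; \tfrac{1}{2}\nabla_{j}S^{\p} \;-\; \alpha\,h\pa{\tau(\p),d\p(\partial_{j})}.
\]
Using the second equation of \eqref{Eq24_systemC} to rewrite $\tau(\p)=-u^{-1}d\p(\nabla u)$, together with the hypothesis $S^{\p}\equiv\mathrm{const.}$, this collapses to
\[
\nabla^{i}\!\pa{\ric^{\p}_{ij}-\tfrac{S^{\p}}{m}g_{ij}} \;=\; \frac{\alpha}{u}\,(\p^{*}h)_{ij}\nabla^{i}u.
\]

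The key step is then to evaluate $\int_{M} u\,\abs{\ric^{\p}-\tfrac{S^{\p}}{m}g}^{2}$ in two ways. By the first identity and the orthogonality of traceless and pure-trace symmetric $2$-tensors, it equals $\int_{M}\pair{\hess(u),\ric^{\p}-\tfrac{S^{\p}}{m}g}$; integrating by parts (no boundary term, since $\partial M=\emptyset$) and substituting the divergence formula just obtained,
\[
\int_{M} u\,\Bigl|\ric^{\p}-\tfrac{S^{\p}}{m}g\Bigr|^{2} \;=\; -\alpha\int_{M}\frac{(\p^{*}h)(\nabla u,\nabla u)}{u} \;=\; -\alpha\int_{M} u\,\abs{\tau(\p)}^{2},
\]
where the last equality uses $d\p(\nabla u)=-u\tau(\p)$. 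Since $u>0$ and (by the standing positivity of $\alpha$) $\alpha>0$, both sides must vanish, yielding $\ric^{\p}=\tfrac{S^{\p}}{m}g$ and $\tau(\p)\equiv 0$; this is the harmonic-Einstein condition \eqref{Eq26}, settling the first claim.

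For the second part, assume $u$ is non-constant. The first equation of \eqref{Eq24_systemC} then reads $\hess(u)=\phi\,g$ with $\phi := uS^{\p}/m-\Lambda$. Taking the divergence, applying the Bochner commutator $\nabla^{j}\hess(u)_{ij}=\nabla_{i}\Delta u+\ric_{ij}\nabla^{j}u$, and using $(\p^{*}h)(\cdot,\nabla u)=h(d\p(\cdot),d\p(\nabla u))=0$ (a consequence of $\tau(\p)=0$ and the second equation), a short calculation gives $(m-1)\nabla\phi=-(S^{\p}/m)\nabla u$, hence
\[
\phi \;=\; -\frac{S^{\p}}{m(m-1)}\,u + c_{0}, \qquad c_{0}\in\erre.
\]
Evaluating $\Delta u=m\phi$ at the extrema of $u$ and invoking the maximum principle excludes $S^{\p}\le 0$ when $u$ is non-constant; hence $k := S^{\p}/\pa{m(m-1)}>0$, and after the shift $\tilde u := u-m(m-1)c_{0}/S^{\p}$ we obtain $\hess(\tilde u)=-k\,\tilde u\,g$. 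Obata's theorem then yields the isometry $(M,g)\cong\mathbb{S}^{m}(k)$. Finally, the second equation together with $\tau(\p)=0$ forces $d\p(\nabla u)=0$; since the integral curves of $\nabla u$ on $\mathbb{S}^{m}(k)$ fill out $M$ minus the two critical points of $u$ and $\p$ is constant along each such curve, continuity forces $\p$ to be constant on all of $M$.

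The main technical obstacle is the correct derivation of the $\p$-Schur identity in the first step and its careful combination with the second equation of \eqref{Eq24_systemC} in the integration by parts; this is what makes the two nonnegative quantities $u\abs{\mathring{\ric}^{\p}}^{2}$ and $\alpha u\abs{\tau(\p)}^{2}$ appear in the key identity with compatible signs, so that the positivity of $u$ and $\alpha$ on closed $M$ collapses it. After that, the passage to the sphere is a standard Obata-type argument and the constancy of $\p$ follows at once from the vanishing of $\tau(\p)$ combined with the second equation.
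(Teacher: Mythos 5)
Your argument is correct, and it splits naturally into two halves that compare differently with the paper. The first half is essentially the paper's proof in a different packaging: the paper computes $\operatorname{div}\pa{\overset{\circ}{\ric^\p}\pa{\nabla u,\,}^\sharp}$ via a general divergence lemma and integrates, while you integrate $\pair{\hess(u),\overset{\circ}{\ric^\p}}$ by parts against the $\p$-Schur identity; both roads lead to the same key identity $\int_M u\pa{|\overset{\circ}{\ric^\p}|^2+\alpha|\tau(\p)|^2}=0$, and both (like the paper) silently use $\alpha>0$, which is not in the theorem's wording. The second half is where you genuinely diverge, to your advantage in terms of economy: once harmonic-Einstein is known, you take the divergence of $\hess(u)=\phi g$ and use the contracted Bianchi identity together with $d\p(\nabla u)=0$ to get $\nabla\phi=-\tfrac{S^\p}{m(m-1)}\nabla u$, rule out $S^\p\leq 0$ by the maximum principle, and apply Obata's theorem directly to the shifted potential $\tilde u$. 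The paper instead invokes its conformal-vector-field machinery (Proposition 4.1 and the Yano--Nagano-type Proposition 4.7), derives the Obata/Kanai equation for $w=\Delta u$ rather than for $u$, and then appeals to Kanai's classification plus a separate remark excluding the non-compact alternatives; your route avoids that long Hessian computation entirely, at the cost of being tailored to $X=\nabla u$ rather than yielding a standalone rigidity result for general conformal fields. For the constancy of $\p$, the paper's argument is slightly cleaner than your gradient-flow-line argument: on a space form $\ric=\tfrac{S^\p}{m}g=\ric^\p$ forces $\alpha\,\p^*h=0$, hence $d\p=0$; your argument also works but requires knowing that the integral curves of $\nabla u$ limit to the two critical points, a fact you use without comment.
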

To introduce our last result, we need the following
\begin{definition}[Jacobi Operator]
	Let $(M,g)$ be a smooth Riemannian manifold with boundary $\partial M$ and let $\phi\in C^{\infty} (\partial M)$, then the \emph{Jacobi operator}, denoted by $J_g$, is defined as
	\begin{align}\label{J}
		J_g\phi=\Delta_g \phi+\pa{\ric_g(\nu,\nu)+\abs{\mathrm{II}}_g^2}\phi,
	\end{align}
	where $\Delta_g$ denotes the Laplace operator with respect to the metric $g$ on the boundary, $\nu$ is the outward unit normal and $\mathrm{II}$ denotes the second fundamental form of $\partial M$. The \emph{first eigenvalue} of $J_g$ is defined as
	\begin{align}\label{lambda 1}
		(\lambda_1)_g:=\inf_{\phi\neq0}\frac{-\int_{\partial M}J_g\phi\, dV_{g|_{\partial M}} }{\int_{\partial M}\phi^2dV_{g|_{\partial M}} }.
	\end{align}
\end{definition}
Moreover, given $\varphi:(M,g)\ra(N,h)$, where $(X,g)$ and $(N,h)$ are smooth Riemannian manifolds and $\varphi$ is a smooth function, we define the $\varphi$-\emph{Yamabe invariant} as
\begin{align}\label{phi Y intro}
	Y(M,[g])^{\varphi}=\inf_{\tilde{g}\in[g]}\mathrm{Vol}(M)^{-\frac{n-2}{n}}\int_M \tilde{S}^{\varphi}dV_{\tilde{g}},
\end{align}
where $[g]$ denotes the conformal class of $g$.
We say that $\tilde{g}\in[g]$ is a $\varphi$-\emph{Yamabe minimizer} if it achieves the infimum in \eqref{phi Y intro}; we also denote with $\tilde{\varphi}$  the map $\varphi:(M,\tilde{g})\ra(N,h)$.
\begin{teo}\label{t-vol est}
	Let $(M,g)$ be a compact Riemannian manifold of dimension $m\geq 3$ with non-empty connected boundary, and let $\varphi:(M,g)\ra(N,g_N)$ be a smooth map, where $(N,g_N)$ is a second Riemannian manifold of dimension $n$. Let us assume the validity of the equation
	\begin{align}\label{e-andradephi}
		u\ric^{\varphi}-\hs(u)=\Lambda g,
	\end{align}
	where $u\in\cinf$, $u>0$ on $M$ and $\partial M=u^{-1}(\set{0})$. Let $\tilde{g}\in[g]$ be such that $\tilde{g}|_{\partial M}$ is a $\varphi$-Yamabe minimizer on $\partial M$, $\Lambda\in\cinf$, $\frac{\Lambda}{u}\in C^0(M)$.
	Then
	\[(\lambda_1)_{\tilde{g}} \leq\frac{1}{2}\pa{(m-1)(m-2)\omega_{m-1}^{\frac{2}{m-1}}\mathrm{Vol}(\partial M)^{-\frac{2}{m-1}}-\tilde{S}^{\tilde{\varphi}}_{min}},\]
	where $\tilde{S}^{\tilde{\varphi}}$ is the $\tilde{\varphi}$-scalar curvature of the metric $\tilde{g}$ and $\omega_{m-1}$ is the volume of the unit $(m-1)$-sphere. Moreover, if equality holds, then $\tilde{S}^{\tilde{\varphi}}$ is constant on $\partial M$, $\tilde{\varphi}^a_m\equiv 0$, $\partial M$ is totally geodesic in $(M,\tilde{g})$ and $(\partial M,\tilde{g}|_{\partial M})$ is conformally equivalent to the standard sphere.
\end{teo}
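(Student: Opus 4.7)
My strategy is to estimate $(\lambda_1)_{\tilde g}$ by inserting the constant test function $\phi\equiv 1$ in the Rayleigh quotient \eqref{lambda 1} and then rewriting the boundary integrand via the Gauss equation on $\partial M \subset (M,\tilde g)$, converting the usual scalar curvatures into $\tilde\p$-scalar curvatures so that the hypothesis that $\tilde g|_{\partial M}$ is a $\tilde\p$-Yamabe minimizer together with the sharp Aubin-type bound on $\p$-Yamabe invariants can be invoked. Equation \eqref{e-andradephi} enters chiefly through the geometric setup it guarantees ($u>0$ on the interior, $u=0$ on $\partial M$), ensuring in particular that $\partial M$ is a smooth hypersurface on which $J_{\tilde g}$ is well-defined.

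\textbf{Step 1 (test function + Gauss).} Testing \eqref{lambda 1} with $\phi\equiv 1$ gives
\[
(\lambda_1)_{\tilde g}\,\mathrm{Vol}_{\tilde g}(\partial M)\le -\int_{\partial M}\bigl(\ric_{\tilde g}(\nu,\nu)+|\II|^2_{\tilde g}\bigr)\,dV_{\tilde g|_{\partial M}}.
\]
The Gauss equation on the hypersurface $\partial M \subset (M,\tilde g)$ reads
\[
\ric_{\tilde g}(\nu,\nu)+|\II|^2_{\tilde g} = \tfrac12\bigl(S_{\tilde g}|_{\partial M}-S^{\partial M}_{\tilde g|_{\partial M}}\bigr)+\tfrac12\bigl(H^2_{\tilde g}+|\II|^2_{\tilde g}\bigr),
\]
so discarding the manifestly non-negative piece $\tfrac12(H^2+|\II|^2)$ yields the pointwise lower bound
\[
\ric_{\tilde g}(\nu,\nu)+|\II|^2_{\tilde g}\ \ge\ \tfrac12\bigl(S_{\tilde g}|_{\partial M}-S^{\partial M}_{\tilde g|_{\partial M}}\bigr).
\]

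\textbf{Step 2 (passage to $\tilde\p$-scalars, Yamabe, Aubin).} Applying \eqref{Eq3_phiScalar} on $(M,\tilde g)$ and on $(\partial M,\tilde g|_{\partial M})$, and decomposing $|d\tilde\p|^2_{\tilde g}|_{\partial M}$ into tangential and normal parts, gives at $\partial M$
\[
S_{\tilde g}|_{\partial M}-S^{\partial M}_{\tilde g|_{\partial M}} = \tilde S^{\tilde\p}|_{\partial M}-S^{\tilde\p}_{\partial M}+\alpha|d\tilde\p(\nu)|^2,
\]
where $S^{\tilde\p}_{\partial M}$ denotes the $\tilde\p$-scalar curvature of $(\partial M,\tilde g|_{\partial M})$; for $\alpha>0$ the last summand is non-negative and can be dropped. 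Integrating, using that $\tilde g|_{\partial M}$ is a $\tilde\p$-Yamabe minimizer so that
\[
\int_{\partial M}S^{\tilde\p}_{\partial M}\,dV = Y(\partial M,[\tilde g|_{\partial M}])^{\tilde\p}\,\mathrm{Vol}(\partial M,\tilde g|_{\partial M})^{(m-3)/(m-1)},
\]
the sharp Aubin-type inequality $Y(\partial M,[\tilde g|_{\partial M}])^{\tilde\p}\le (m-1)(m-2)\omega_{m-1}^{2/(m-1)}$, and the trivial pointwise estimate $S^{\tilde\p}_{\tilde g}|_{\partial M}\ge \tilde S^{\tilde\p}_{\min}$, dividing by $\mathrm{Vol}_{\tilde g}(\partial M)$ produces exactly the stated upper bound on $(\lambda_1)_{\tilde g}$.

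\textbf{Main obstacle and equality analysis.} The most delicate bookkeeping lies in the $\tilde\p$-version of the Gauss identity: the tangential/normal splitting of $|d\tilde\p|^2$ generates the extra term $\alpha|d\tilde\p(\nu)|^2$, whose sign (and so whose admissibility in the estimate) requires $\alpha>0$, in line with the sign convention adopted in the rest of the paper. For the equality case, each of the inequalities employed must be saturated: vanishing of $\tfrac12(H^2+|\II|^2)$ forces $H_{\tilde g} = 0 = |\II|_{\tilde g}$, so $\partial M$ is totally geodesic in $(M,\tilde g)$; vanishing of $\alpha|d\tilde\p(\nu)|^2$ forces $\tilde\p^a_m\equiv 0$; tightness of the pointwise estimate $S^{\tilde\p}_{\tilde g}|_{\partial M}\ge \tilde S^{\tilde\p}_{\min}$ after integration forces $\tilde S^{\tilde\p}$ to be constant on $\partial M$; and equality in the Aubin bound forces $(\partial M,\tilde g|_{\partial M})$ to be conformally equivalent to the standard sphere.
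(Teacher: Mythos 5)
Your argument is correct and follows essentially the same route as the paper's: the constant test function $\phi\equiv1$ in the Rayleigh quotient, the traced Gauss equation rewritten in terms of $\varphi$-scalar curvatures (which produces exactly the $\alpha(\tilde{\varphi}^a_m)^2$ normal term), the $\varphi$-Yamabe minimizer property together with the Aubin-type bound $Y^{\varphi}\leq Y(\mathbb{S}^{m-1},[g_{\mathbb{S}^{m-1}}])=(m-1)(m-2)\omega_{m-1}^{2/(m-1)}$ for $\alpha>0$, and saturation of each intermediate inequality in the equality case. The only cosmetic difference is that you retain $\tfrac12\pa{H^2+|\II|^2}$ as a single non-negative remainder, whereas the paper first uses Lemma \ref{Lemma3.1} (via \eqref{e-andradephi} and $\frac{\Lambda}{u}\in C^0(M)$) to see that $\partial M$ is totally geodesic in $(M,g)$, so that the second fundamental form of $(M,\tilde{g})$ is pure trace and $|\tilde{\mathrm{II}}|^2$ reduces to a multiple of $\tilde{H}^2$; both versions yield the same estimate and the same equality analysis.
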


\begin{rem}
	The technique used in the proof of Theorem \ref{t-vol est} is similar to the one used by Andrade in \cite{andrade}, where she considers Riemannian manifold $(M,g)$ with boundary satisfying the equation
	\[u\ric-\hess(u)=\Lambda g.\]
	In particular, she provides an estimate for the first Jacobi egenvalue (with respect to the metric $g$) in terms of $\mathrm{Vol}(\partial M)$ and the Yamabe invariant of the $(m-1)$-sphere. However, note that the assumptions in \cite{andrade} are stronger than ours, indeed the boundary is assumed to be Einstein with positive scalar curvature.
\end{rem}


\section{Proof of Theorem \ref{Th_7}}
In the rest of the paper we shall freely use the moving frame notation: indices will run in the ranges
\[
1\leq i, j, \ldots, \leq m, \qquad 1 \leq a, b, \ldots, \leq n,
\]
and $\set{\theta^i}$, $\set{\omega^a}$ will be, respectively, local orthonormal coframes on open sets $U\subseteq M$, $V\subseteq N$ such that $\varphi(U)\subseteq V$.
To prove Theorem \ref{Th_7} we shall need the following
\begin{prop}
  Let $(M, g)$ be a Riemannian manifold of dimension $m\geq 2$ satisfying system \eqref{Eq1_system}. Then, on the set $\set{x\in M : u(x)>0}$,
  \begin{equation}\label{Eq2.3}
    \frac{1}{2}\Delta_{-3\log u}S^\p = -\pa{S^\p-m\frac{\Lambda}{u}}\pa{S^\p-(m-1)\frac{\Lambda}{u}}+(m-1)\Delta_{-2\log u}\pa{\frac{\Lambda}{u}}.
  \end{equation}
\end{prop}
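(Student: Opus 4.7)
\medskip

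\noindent\textbf{Proof proposal.} The plan is to derive \eqref{Eq2.3} in three main moves: first, extract a first-order identity for $\nabla S^\p$ by differentiating the system \eqref{Eq1_system}; second, take one more divergence to produce $\Delta S^\p$; and finally package the result as the stated weighted-Laplacian identity.

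\smallskip

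\noindent\emph{Step 1: first-order consequence.} Tracing the first equation of \eqref{Eq1_system} gives immediately
\[
\Delta u = uS^\p - m\Lambda.
\]
Next I compute $\diver(\ric^\p)$. The contracted second Bianchi identity gives $\diver\ric = \tfrac12\nabla S$, while a direct computation (using the symmetry of the generalized second fundamental tensor $\p^a_{ij}$ of $\p$, i.e.\ $\p^a_{ij}=\p^a_{ji}$, in moving frames) yields
\[
(\diver\p^*h)_j \;=\; \tau(\p)^a \p^a_j + \tfrac{1}{2}\nabla_j|d\p|^2.
\]
Inserting the second equation $u\tau(\p)^a = -\p^a_i u_i$ converts the $\tau(\p)$-term into $-\tfrac{1}{u}(\p^*h)_{ij}u_i$, so that
\[
(\diver\ric^\p)_j \;=\; \tfrac{1}{2}\nabla_j S^\p + \tfrac{\alpha}{u}(\p^*h)_{ij}u_i.
\]
Taking the divergence of $u\ric^\p-\hess(u)=\Lambda g$ and using the commutator $\diver(\hess u)_j = \nabla_j\Delta u + R_{jk}u_k$, the terms $u_i(\ric^\p)_{ij}$ and $\tfrac{\alpha u}{u}(\p^*h)_{ij}u_i$ combine to leave exactly $R_{ij}u_i$, which cancels the Ricci term coming from the Hessian commutator. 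After substituting $\Delta u = uS^\p-m\Lambda$ the net result is the clean identity
\begin{equation}\label{Eq_star}
\tfrac{u}{2}\,\nabla_j S^\p \;=\; (m-1)\nabla_j\Lambda \;-\; S^\p\,u_j.
\end{equation}

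\smallskip

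\noindent\emph{Step 2: second-order identity.} I next take the divergence of \eqref{Eq_star}, rewritten as $\nabla_j S^\p = \tfrac{2(m-1)}{u}\nabla_j\Lambda - \tfrac{2S^\p}{u}u_j$. A routine quotient-rule computation gives
\[
\tfrac{1}{2}\Delta S^\p \;=\; \tfrac{m-1}{u}\Delta\Lambda \;-\; \tfrac{3(m-1)}{u^2}g(\nabla u,\nabla\Lambda) \;-\; \tfrac{g(\nabla u,\nabla S^\p)}{u} \;-\; \tfrac{S^\p\Delta u}{u} \;+\; \tfrac{3S^\p|\nabla u|^2}{u^2},
\]
where I have already eliminated $g(\nabla u,\nabla S^\p)$ in favour of $\nabla\Lambda$- and $|\nabla u|^2$-terms via \eqref{Eq_star}. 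Forming $\tfrac12\Delta_{-3\log u}S^\p = \tfrac12\Delta S^\p + \tfrac{3}{2u}g(\nabla u,\nabla S^\p)$ and again using \eqref{Eq_star} to rewrite $g(\nabla u,\nabla S^\p)$, all terms containing $g(\nabla u,\nabla\Lambda)$ and $|\nabla u|^2$ cancel, leaving the remarkably compact formula
\[
\tfrac{1}{2}\Delta_{-3\log u}S^\p \;=\; \tfrac{m-1}{u}\Delta\Lambda \;-\; \tfrac{S^\p\Delta u}{u}.
\]

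\smallskip

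\noindent\emph{Step 3: repackaging.} A direct computation (the cross terms between $\nabla\Lambda$ and $\nabla u$ cancel) shows that
\[
\Delta_{-2\log u}\!\pa{\tfrac{\Lambda}{u}} \;=\; \tfrac{\Delta\Lambda}{u} \;-\; \tfrac{\Lambda\,\Delta u}{u^2}.
\]
Substituting $\Delta u = uS^\p - m\Lambda$ into both this identity and the formula from Step 2, and expanding $-(S^\p-m\Lambda/u)(S^\p-(m-1)\Lambda/u)$, one checks algebraically that
\[
\tfrac{m-1}{u}\Delta\Lambda - \tfrac{S^\p\Delta u}{u} \;=\; -\pa{S^\p-m\tfrac{\Lambda}{u}}\!\pa{S^\p-(m-1)\tfrac{\Lambda}{u}} + (m-1)\Delta_{-2\log u}\!\pa{\tfrac{\Lambda}{u}},
\]
which is precisely \eqref{Eq2.3}.

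\smallskip

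The main obstacle is Step~1: correctly identifying $\diver(\ric^\p)$ with the right correction term and seeing that, after applying the second equation of \eqref{Eq1_system}, this correction cancels exactly against $u_i(\ric^\p)_{ij}$ in the full divergence of the first equation. Once \eqref{Eq_star} is in hand, Steps~2 and~3 are bookkeeping: repeatedly using \eqref{Eq_star} to swap $g(\nabla u,\nabla S^\p)$ for $\nabla\Lambda$-terms is what produces the cancellations that make the $\Delta_{-3\log u}$ on the left and the $\Delta_{-2\log u}$ on the right appear with the precise coefficients $-3$ and $-2$.
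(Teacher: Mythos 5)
Your proposal is correct and follows essentially the same route as the paper: both arguments hinge on first establishing the key first-order identity $\frac{u}{2}\nabla S^\p = (m-1)\nabla\Lambda - S^\p\nabla u$ (you obtain it by taking the divergence of the first equation and invoking the $\p$-Schur identity together with the second equation of the system to cancel the $\alpha$-terms; the paper antisymmetrizes the covariant derivative of the equation and contracts, which is the same computation in different clothing), and then taking one further divergence and repackaging via $\Delta_{-2\log u}\pa{\frac{\Lambda}{u}} = \frac{1}{u}\pa{\Delta\Lambda - \frac{\Lambda}{u}\Delta u}$. One bookkeeping slip: the first display of your Step 2 is internally inconsistent --- having substituted for $g\pa{\nabla u,\nabla S^\p}$ to produce the coefficient-$3$ terms, you should have deleted the leftover $-\frac{1}{u}g\pa{\nabla u,\nabla S^\p}$, so that display as written is off by exactly that quantity; the concluding formula of Step 2, $\frac{1}{2}\Delta_{-3\log u}S^\p = \frac{m-1}{u}\Delta\Lambda - \frac{S^\p\Delta u}{u}$, is nevertheless correct, and the remainder of the argument goes through.
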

\begin{proof}
  We take the covariant derivative of the first equation in \eqref{Eq1_system}, obtaining
  \begin{equation}\label{Eq2.4}
    u_kR^\p_{ij} + uR^\p_{ij, k} - u_{ijk} = \Lambda_k\delta_{ij}.
  \end{equation}
  Reversing the roles of $j$ and $k$ in \eqref{Eq2.4} and subtracting from \eqref{Eq2.4} we deduce
  \begin{equation}\label{Eq2.5}
     u_kR^\p_{ij}- u_jR^\p_{ik} + u\pa{R^\p_{ij, k}-R^\p_{ik, j}} = u_{ijk}-u_{ikj} + \Lambda_k\delta_{ij}-\Lambda_j\delta_{ik}.
  \end{equation}
  From the standard Ricci commutation relations we know that
  \[
  u_{ikj} = u_{ijk} + u_tR_{tikj};
  \]
  thus, from \eqref{Eq2.5} we get
  \begin{equation}\label{Eq2.6}
    u\pa{R^\p_{ij, k}-R^\p_{ik, j}} = -u_tR_{tikj} + \Lambda_k\delta_{ij}-\Lambda_j\delta_{ik} -  u_kR^\p_{ij}+ u_jR^\p_{ik}.
  \end{equation}
  Contracting with respect to $i$ and $j$, using the definition of $\ric^\p$ and the $\p$-\emph{Schur's identity}, that is,
  \begin{equation}\label{phiS}
  R^\p_{ij, j} = \frac{1}{2} S^\p_i - \alpha\p^a_{tt}\p^a_i
  \end{equation}
  (see for instance \cite{ACR} for a proof), we obtain
  \begin{equation}\label{Eq2.7}
    \frac{1}{2}uS^\p_k = -\alpha \p^a_k\pa{u_t\p^a_t+u\p^a_{tt}} + (m-1)\Lambda_k - u_kS^\p,
  \end{equation}
  and therefore, using  the second equation in \eqref{Eq1_system}, we obtain
  \begin{equation}\label{Eq2.8}
    \frac{1}{2}uS^\p_k =  (m-1)\Lambda_k - u_kS^\p.
  \end{equation}
  Contracting \eqref{Eq2.8} with $\frac{u_k}{u^2}$ yields
  \begin{equation}\label{Eq2.9}
     \frac{1}{2}S^\p_k \frac{u_k}{u}=  (m-1)\frac{\Lambda_k u_k}{u^2} - S^\p\frac{\abs{\nabla u}^2}{u^2}.
  \end{equation}
  Now we compute the divergence of $\nabla S^\p$, using the relation
  \[
    \frac{1}{2}S^\p_k =  (m-1)\frac{\Lambda_k}{u} -S^\p \frac{u_k}{u},
  \]
  which is obtained from \eqref{Eq2.8} dividing by $u$. After some algebraic manipulations we get
  \begin{align}\label{Eq2.10}
    \frac{1}{2}\Delta S^\p &= \frac{m-1}{u}\pa{\Delta\Lambda-\frac{\Lambda}{u}\Delta u} + \pa{\frac{m-1}{u}\Lambda-S^\p}\frac{\Delta u}{u} + S^\p\frac{\abs{\nabla u}^2}{u^2} \\ \nonumber &-\frac{m-1}{u^2}g\pa{\nabla\Lambda, \nabla u}-\frac{1}{u}g(\nabla S^{\varphi},\nabla u).
  \end{align}
  We observe that, tracing the first equation in \eqref{Eq1_system}, we have
  \begin{equation}\label{Eq2.11}
    \frac{\Delta u}{u} = S^\p - m\frac{\Lambda}{u};
  \end{equation}
  moreover, an easy computation shows that
  \begin{equation}\label{Eq2.12}
    \Delta_{{-2}\log u}\pa{\frac{\Lambda}{u}} = \frac{1}{u}\pa{\Delta\Lambda - \frac{\Lambda}{u}\Delta u},
  \end{equation}
  so that, inserting \eqref{Eq2.9}, \eqref{Eq2.11} and \eqref{Eq2.12} into \eqref{Eq2.10} we get
  \begin{equation*}
    \frac{1}{2}\pa{\Delta S^\p + 3g\pa{\nabla S^\p, \frac{\nabla u}{u}}} = (m-1) \Delta_{\log u^{-2}}\pa{\frac{\Lambda}{u}}-\pa{S^\p-m\frac{\Lambda}{u}}\pa{S^\p-(m-1)\frac{\Lambda}{u}},
  \end{equation*}
that is, equation \eqref{Eq2.3}.
\end{proof}
We are now ready for the
\begin{proof}[Proof of Theorem \ref{Th_7}]
  We perform the change of variable
  \begin{equation}\label{Eq2.13}
    u = e^{-f}
  \end{equation}
  so that system \eqref{Eq1_system} becomes
  \begin{equation}\label{Eq2.14}
  	\begin{cases}
		\ric^\p+\hess(f)-df\otimes df=\Lambda e^f g,\\
		\tau(\p)=d\p(\nabla f);
	\end{cases}
\end{equation}
having set $v = -S^\p$, equation \eqref{Eq2.3} becomes
\[
\frac{1}{2}\Delta_{3f}v = \pa{v+m\Lambda e^f}\pa{v+(m-1)\Lambda e^f} - (m-1)\Delta_{2f}(\Lambda e^f),
\]
  so that, by assumption \eqref{Eq8_Deltalog},
  \begin{equation}\label{Eq2.15}
    \frac{1}{2}\Delta_{3f}v \geq \pa{v+m\lambda e^f}\pa{v+(m-1)\Lambda e^f}.
  \end{equation}
  From the first equation in \eqref{Eq2.14}, $\alpha>0$ and \eqref{Eq9_lambda} we have
  \begin{equation}\label{Eq2.16}
    \ric + \hess{f} - df\otimes df \geq \lambda g.
  \end{equation}
  We now consider the three possible cases.

\textbf{1)} ${\lambda>0}$. 
Completeness of $(M, g)$ implies, by Theorem 5 of Z. Qian \cite{Qian}, that $M$ is compact. It follows that there exists $x_0\in M$ such that $-v(x_0) = S^\p_*$; we now show that
\begin{equation}\label{Eq2.17}
  S^\p_* \geq (m-1)\lambda.
\end{equation}
By contradiction, suppose this is not the case, that is,
\[
S^\p_* < (m-1)\lambda
\]
or, equivalently,
\begin{equation}\label{Eq2.18}
  \Omega = \set{x\in M : v(x)>-(m-1)\lambda} \neq \emptyset.
\end{equation}
Since $(m-1)\lambda<m\lambda$ we also have
\[
v(x) > -m\lambda \qquad \text{on } \,\, \Omega.
\]
Using \eqref{Eq9_lambda} it follows that, on $\Omega$,
\[
v + (m-1)\Lambda e^f \geq v + (m-1)\lambda > 0
\]
and
\[
v + m\Lambda e^f \geq v + m\lambda>0.
\]
Then \eqref{Eq2.15} gives
\begin{equation}\label{Eq2.19}
  \frac{1}{2}\Delta_{3f}v \geq \pa{v+m\lambda}\pa{v+(m-1)\lambda}\quad \text{on }\,\, \Omega,
\end{equation}
but $x_0\in \Omega$ and is a maximum of $v$: it follows that
\[
0 \geq \pa{-S^\p_*+m\lambda}\pa{-S^\p_*+(m-1)\lambda}
\]
from which we immediately deduce the validity of the inequality in the conclusion i).

\textbf{2)} ${\lambda=0}$. We show that $S^\p_* \geq 0$. By contradiction suppose that this is not the case, so that
\[
\Omega = \set{x\in M : v(x)>0} \neq \emptyset.
\]
Using \eqref{Eq9_lambda} it follows that, on $\Omega$,
\[
v+ (m-1)\Lambda e^f \geq v > 0 \quad \text{and }\,\,\, v+ m\Lambda e^f \geq v > 0.
\]
Thus \eqref{Eq2.15} gives
\begin{equation}\label{Eq2.21}
  \frac{1}{2}\Delta_{3f}v \geq v^2\quad \text{on }\,\, \Omega;
\end{equation}
from \eqref{Eq2.16} we obtain
\begin{equation}\label{Eq2.22}
  \ric + \hess(f) \geq 0.
\end{equation}
By Proposition 8.11 of \cite{AMR} for $\lambda \leq 0$, from \eqref{Eq2.16} we deduce
\begin{equation}\label{Eq2.23}
  \liminf_{r\ra+\infty}\frac{1}{r^2} \log\operatorname{vol}_f(B_r) < +\infty,
\end{equation}
where $\operatorname{vol}_f(B_r) = \int_{B_r(o)}e^{-f}$ for some fixed origin $o\in M$. Now we first apply Theorem 4.2 of \cite{AMR} to deduce from \eqref{Eq2.21} that $v^* < +\infty$, and then Theorem 4.1, again from \cite{AMR}, to infer $v^*\leq 0$, so that $S^\p_*\geq 0$.

\textbf{3)} ${\lambda<0}$. We claim that $S^\p_*\geq \lambda$. Suppose the contrary: then
\[
\Omega = \set{x\in M : v(x)>-m\lambda} \neq \emptyset.
\]
Since $(m-1)\lambda > m\lambda$ we also have
\[
v(x)>-(m-1)\lambda \quad \text{on }\,\, \Omega;
\]
then \eqref{Eq2.15} gives
\begin{equation}\label{Eq2.24}
  \frac{1}{2}\Delta_{3f}v \geq \pa{v+m\lambda}\pa{v+(m-1)\lambda}\quad \text{on }\,\, \Omega.
\end{equation}
The validity of \eqref{Eq2.23} and the completeness of $(M, g)$ enable us to apply again Theorems 4.2 and 4.1 of \cite{AMR} to conclude.
\end{proof}

\section{Proof of Theorem \ref{Th_13}}

We begin with the following observation, whose proof is, by now, quite standard; the aim is to emphasize the role of the assumption $\frac{\Lambda}{u}\in C^0(M)$. Let $M = \operatorname{int}(M) \cup \partial M$.
\begin{lemma}\label{Lemma3.1}
  Let $(M, g)$ be a compact manifold with boundary $\partial M \neq \emptyset$ and let $u\in C^2(M)$ be a solution of
  \begin{equation}\label{Eq3.2}
    u\ric^\p - \hess(u) = \Lambda g
  \end{equation}
  on $M$ for some $\Lambda\in C^\infty(M)$ and $\p : (M, g) \ra (N, h)$ a smooth map. Suppose $u>0$ on $\operatorname{int}(M)$ and $\partial M = u^{-1}\pa{\set{0}}$; then $\abs{\nabla u}$ is a positive constant on each connected component of $\partial M$ and the inclusion $\imath : \partial M \hookrightarrow M$ is totally geodesic, provided
  \begin{equation}\label{Eq3.3}
    \frac{\Lambda}{u}\in C^0(M).
  \end{equation}
\end{lemma}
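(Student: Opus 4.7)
The plan is to reduce everything to the single observation that $\hess(u)$ vanishes identically on $\partial M$, and then to read off the three conclusions via routine computations in an adapted orthonormal frame.

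First, I would exploit the hypothesis $\Lambda/u \in C^0(M)$ to show $\Lambda \equiv 0$ on $\partial M$. Indeed, writing $\Lambda = u\cdot(\Lambda/u)$ and using that $\Lambda/u$ is bounded on the compact set $M$ while $u \equiv 0$ on $\partial M$, we immediately get $\Lambda|_{\partial M} \equiv 0$. Substituting this and $u = 0$ into \eqref{Eq3.2} restricted to $\partial M$ forces $\hess(u)|_{\partial M} \equiv 0$. This is the crucial place where the continuity hypothesis enters.

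Next, I would establish $\abs{\nabla u} > 0$ on $\partial M$ by Hopf's boundary point lemma. Tracing \eqref{Eq3.2} gives $\Delta u = u\pa{S^\p - m\Lambda/u}$, a linear elliptic equation for $u$ with continuous coefficient. Since $u > 0$ on $\operatorname{int}(M)$, $u \equiv 0$ on $\partial M$, and the interior sphere condition is satisfied at every boundary point, the Hopf lemma yields that the outward normal derivative is strictly negative, hence $\abs{\nabla u} > 0$ on $\partial M$. In particular, along $\partial M$, $\nabla u$ is a non-vanishing normal field, and we may set $\nu = -\nabla u/\abs{\nabla u}$ as outward unit normal (sign chosen so that $u$ decreases outward).

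For the constancy of $\abs{\nabla u}$, I would work in a local orthonormal frame $\set{e_i}_{i=1}^m$ adapted to $\partial M$, with $e_m$ the inward normal, so that on $\partial M$ one has $u_i = 0$ for $i<m$ and $u_m = \abs{\nabla u}$. Differentiating tangentially,
\[
e_i\!\pa{\abs{\nabla u}^2} = 2\,u_{ij}u_j = 2\,u_{im}u_m \qquad (i < m),
\]
which vanishes because $u_{im} = 0$ from $\hess(u)|_{\partial M} \equiv 0$. Hence $\abs{\nabla u}$ is constant on each connected component of $\partial M$, and positive by the Hopf step. Finally, for tangent vector fields $X, Y$ to $\partial M$, the second fundamental form satisfies
\[
\II(X,Y) = g(\nabla_X \nu, Y) = -\frac{1}{\abs{\nabla u}}\,\hess(u)(X,Y),
\]
since the other term produced by the product rule is proportional to $g(\nabla u, Y)$ and thus vanishes because $\nabla u$ is normal to $\partial M$. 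As $\hess(u)|_{\partial M} \equiv 0$, we conclude $\II \equiv 0$, i.e.\ $\partial M$ is totally geodesic in $M$.

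Aside from the standard frame computations, the only genuine subtlety is the very first step: without the continuity hypothesis $\Lambda/u \in C^0(M)$ the function $\Lambda$ need not vanish on $\partial M$, and then $\hess(u)$ would not vanish there either, invalidating both the constancy of $\abs{\nabla u}$ and the total geodesicity.
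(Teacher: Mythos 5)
Your proof is correct, but it is organized around a different pivot than the paper's. You first extract the pointwise identity $\hess(u)|_{\partial M}\equiv 0$ by writing $\Lambda=u\cdot\pa{\Lambda/u}$ and letting $u\to 0$ in \eqref{Eq3.2}; from this single fact both the constancy of $\abs{\nabla u}$ along each component and the vanishing of $\II$ follow by the frame computations you indicate. The paper never isolates this statement: it instead computes $\nabla\abs{\nabla u}^2=2u\sq{\ric^\p\pa{\nabla u,\cdot}^\sharp-\tfrac{\Lambda}{u}\nabla u}$ and, separately, $\II=\tfrac{u}{\abs{\nabla u}}\pa{\ric^\p-\tfrac{\Lambda}{u}g}$ on $T\partial M\times T\partial M$, and in each case the explicit factor of $u$ kills the right-hand side on the boundary. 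The mechanism (continuity of $\Lambda/u$ plus $u\equiv 0$ on $\partial M$) is identical, but your packaging is marginally stronger and more economical, since one boundary identity yields both conclusions. The second genuine difference is the non-vanishing of $\nabla u$: you invoke the Hopf boundary point lemma for the traced equation $\Delta u=\pa{S^\p-m\tfrac{\Lambda}{u}}u$, whereas the paper runs an ODE uniqueness argument along a unit-speed geodesic entering $M$ normally at $p$ (if $\nabla u(p)=0$ then $v=u\circ\gamma$ solves a second-order linear ODE with vanishing Cauchy data, hence $v\equiv 0$ near $0$, contradicting $u>0$ on the interior). Both are legitimate; the paper's route is more elementary and self-contained, while yours leans on standard elliptic theory. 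The only point you should make explicit is that the Hopf lemma with a zeroth-order coefficient of arbitrary sign applies here precisely because the boundary minimum value is $0$ (or, equivalently, after splitting the coefficient into positive and negative parts); as stated, "linear elliptic equation with continuous coefficient" slightly understates the hypothesis being used. This is a routine remark, not a gap.
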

\begin{proof}
  For the sake of completeness, we include the proof here. Using \eqref{Eq3.2} we obtain
  \[
  \nabla \abs{\nabla u}^2 = 2u\sq{\ric^\p\pa{\nabla u, \cdot}^\sharp - \frac{\Lambda}{u}\nabla u}.
  \]
  Hence $u\equiv 0$ on $\partial M$ and \eqref{Eq3.3} show that $\nabla \abs{\nabla u}^2 \equiv 0$ on $\partial M$. We now show that $\abs{\nabla u}(p) \neq 0$ for $p\in \partial M$; towards this aim, let $\nu$ be the outward unit normal to $\partial M$ at $p$, $\eps>0$ sufficiently small and  $\gamma : [0, \eps) \ra M$ a unit speed geodesic such that $\gamma(0)=p$ and $\dot{\gamma}(0)=-\nu$. Define
  \[
  v(t) = \pa{u\circ\gamma}(t) \quad \text{on }\,\, [0, \eps).
  \]
  Using \eqref{Eq3.2} and the fact that $\gamma$ is a geodesic we have
  \[
  \begin{cases}
    v'' = \ric^\p\pa{\dot{\gamma}, \dot{\gamma}}v - \Lambda(\gamma),\\ v'(0) = g\pa{\nabla u(p), \dot{\gamma}(0)}, \\ v(0) = u(p) = 0.
  \end{cases}
  \]
  Therefore, if $\nabla u(p)=0$, $v'(0)=0$: in this case we have $v\equiv 0$ on $[0, \eps')$ for some $0<\eps'\leq \eps$. This is a contradiction, since $\gamma\pa{(0, \eps')} \subseteq \operatorname{int}(M)$ and $u>0$ on $\operatorname{int}(M)$. It follows that $\abs{\nabla u}$ is a positive constant on each connected component of $\partial M$; in particular,
  \[
  \nu = -\frac{\nabla u}{\abs{\nabla u}}
  \]
  is the outward unit normal on $\partial M$ and the second fundamental form in the direction of $\nu$ is given by
  \[
  \mathrm{II} = \frac{1}{\abs{\nabla u}}\hess(u)|_{T\partial M \times T\partial M} = \frac{u}{\abs{\nabla u}}\pa{\ric^\p-\frac{\Lambda}{u}g},
  \]
  so that, using assumption \eqref{Eq3.3}, we deduce that $\imath : \partial M \hookrightarrow M$ is totally geodesic.
\end{proof}

\begin{lemma}\label{Lemma3.4}
  Let $(M, g)$ be a manifold of dimension $m\geq 2$ and let $u\in C^2(M)$, $u>0$ on $\operatorname{int}(M)$, be a solution of
  \begin{equation}\label{Eq3.5}
    u\ric^\p-\hess(u)=\Lambda g
  \end{equation}
  for some $\Lambda\in C^\infty(M)$ and $\p : (M, g) \ra (N, h)$ a smooth map. Let $Z$ be the vector field defined on $\operatorname{int}(M)$ by
  \begin{equation}\label{Eq3.6_defZ}
    Z = \frac{1}{u}\nabla\sq{\abs{\nabla u}^2-\pa{\frac{S^\p}{m}-\frac{\Lambda}{u}}u^2}.
  \end{equation}
  Then the divergence of $Z$ is given by
  \begin{align}\label{Eq3.7_divZ}
    \operatorname{div}(Z) &= \frac{2}{u}\sq{\abs{\hess(u)}^2-\frac{\pa{\Delta u}^2}{m}} + \frac{2}{u}\pa{S^\p-(m-1)\frac{\Lambda}{u}}\abs{\nabla u}^2 + \frac{2\alpha}{u}\abs{d\p(\nabla u)}^2 \\ \nonumber &-\frac{u}{m}\sq{\Delta\pa{S^\p-m\frac{\Lambda}{u}}-\frac{2m-3}{u}g\pa{\nabla\pa{S^\p-m\frac{\Lambda}{u}}, \nabla u}}.
  \end{align}
\end{lemma}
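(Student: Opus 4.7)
My plan is to rewrite $Z$ in a form better suited to differentiation, then apply a Bochner-type identity and track cancellations. Tracing the equation \eqref{Eq3.5} gives $\Delta u = u\bigl(S^\p-m\tfrac{\Lambda}{u}\bigr)$, so setting $A=S^\p-m\tfrac{\Lambda}{u}$ we have $\bigl(\tfrac{S^\p}{m}-\tfrac{\Lambda}{u}\bigr)u^2=\tfrac{u\Delta u}{m}=\tfrac{Au^2}{m}$. Thus with $W:=|\nabla u|^2-\tfrac{Au^2}{m}$ the field is $Z=u^{-1}\nabla W$, and
\[
\diver(Z)=\frac{\Delta W}{u}-\frac{g(\nabla u,\nabla W)}{u^2}.
\]

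Next I would compute $\Delta W$ and $g(\nabla u,\nabla W)$ separately. For the first, Bochner's formula gives
\[
\tfrac{1}{2}\Delta|\nabla u|^2=|\hess u|^2+g(\nabla\Delta u,\nabla u)+\ric(\nabla u,\nabla u),
\]
and using $\ric=\ric^\p+\alpha\,\p^*h$ together with the equation contracted twice with $\nabla u$, namely $u\,\ric^\p(\nabla u,\nabla u)=\hess(u)(\nabla u,\nabla u)+\Lambda|\nabla u|^2$, yields $\ric(\nabla u,\nabla u)=u^{-1}\hess(u)(\nabla u,\nabla u)+\tfrac{\Lambda}{u}|\nabla u|^2+\alpha|d\p(\nabla u)|^2$. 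For the $\tfrac{u\Delta u}{m}=\tfrac{Au^2}{m}$ piece I would expand $\Delta(u\Delta u)=u\Delta^2 u+(\Delta u)^2+2g(\nabla u,\nabla\Delta u)$ and $g(\nabla u,\nabla(u\Delta u))=\Delta u\,|\nabla u|^2+u\,g(\nabla u,\nabla\Delta u)$, then substitute $\Delta u=Au$, so that $\nabla\Delta u=A\nabla u+u\nabla A$ and $\Delta(\Delta u)=uA^2+u\Delta A+2g(\nabla u,\nabla A)$.

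Combining these expressions, the term $\tfrac{2}{u^2}\hess(u)(\nabla u,\nabla u)$ produced by Bochner (after division by $u$) cancels exactly with the corresponding term in $u^{-2}g(\nabla u,\nabla|\nabla u|^2)=2u^{-2}\hess(u)(\nabla u,\nabla u)$, which is the key simplification. The coefficients of $g(\nabla u,\nabla A)$ assemble as $\tfrac{2(m-2)}{m}+\tfrac{1}{m}=\tfrac{2m-3}{m}$, while the $|\nabla u|^2$ contributions collapse to $\tfrac{2}{u}\bigl(A+\tfrac{\Lambda}{u}\bigr)=\tfrac{2}{u}\bigl(S^\p-(m-1)\tfrac{\Lambda}{u}\bigr)$; the remaining $|\hess u|^2-\tfrac{u^2A^2}{m}=|\hess u|^2-\tfrac{(\Delta u)^2}{m}$ packs into the traceless Hessian term, and the leftover $-\tfrac{u\Delta A}{m}+\tfrac{2m-3}{m}g(\nabla u,\nabla A)$ is precisely $-\tfrac{u}{m}\bigl[\Delta A-\tfrac{2m-3}{u}g(\nabla A,\nabla u)\bigr]$. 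This gives \eqref{Eq3.7_divZ}.

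The main difficulty is purely one of bookkeeping: no single step is deep, but one must resist accumulating algebraic errors when juggling the Bochner identity, the trace of the equation, the product-rule expansions of $\Delta(u\Delta u)$ and $g(\nabla u,\nabla(u\Delta u))$, and the substitutions $\Delta u=Au$ with $A=S^\p-m\tfrac{\Lambda}{u}$. The pleasant surprise is the exact cancellation of the $\hess(u)(\nabla u,\nabla u)/u^2$ terms, without which the identity would not close; writing the computation in moving frames, as set up at the start of the section, may be the cleanest way to verify it.
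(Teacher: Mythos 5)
Your proposal is correct and follows essentially the same route as the paper: both are direct computations of $\operatorname{div}(Z)$ that trace the equation to get $\Delta u = \bigl(S^\p - m\tfrac{\Lambda}{u}\bigr)u$, use the Ricci commutation (Bochner) identity together with $\ric = \ric^\p + \alpha\,\p^*h$ and the main equation to convert $\ric^\p(\nabla u,\nabla u)$, and then collect coefficients; the cancellation of the $\hess(u)(\nabla u,\nabla u)/u^2$ terms you highlight is exactly the simplification that occurs in the paper's moving-frame version, and your coefficient bookkeeping ($\tfrac{2m-3}{m}$ for $g(\nabla A,\nabla u)$, $\tfrac{2}{u}\bigl(S^\p-(m-1)\tfrac{\Lambda}{u}\bigr)$ for $|\nabla u|^2$) checks out.
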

\begin{proof}
  Tracing equation \eqref{Eq3.5} we get
  \begin{equation}\label{Eq3.8}
    \Delta u = \frac{A}{m-1}u,
  \end{equation}
  where we have set
  \begin{equation}\label{Eq3.9}
    A = (m-1)\pa{S^\p-m\frac{\Lambda}{u}}
  \end{equation}
  to simplify the writing. Using \eqref{Eq3.8} we infer
  \begin{equation}\label{Eq3.10}
    \frac{2}{u}g\pa{\nabla\Delta u, \nabla u} = \frac{2}{u^2}\Delta u \abs{\nabla u}^2 + 2g\pa{\nabla\pa{\frac{A}{m-1}}, \nabla u}.
  \end{equation}
  We rewrite the components of $Z$ in \eqref{Eq3.6_defZ} in the form
  \[
  Z_k = \frac{2}{u}u_{ik}u_i - \frac{2}{m(m-1)}Au_k -  \frac{u}{m(m-1)}A_k.
  \]
  Computing the divergence and recalling the definition of $\ric^\p$ we obtain
  \begin{align}\label{Eq3.11}
     \operatorname{div}(Z) &= \frac{2}{u}\sq{\abs{\hess(u)}^2-\frac{\pa{\Delta u}^2}{m}} -\frac{u}{m}\sq{\frac{3}{u}g\pa{\nabla\pa{\frac{A}{m-1}}, \nabla u} + \Delta\pa{\frac{A}{m-1}}} \\ \nonumber &+\frac{2}{u}\set{-\frac{1}{u}\hess(u)\pa{\nabla u, \nabla u} + \ric^\p\pa{\nabla u, \nabla u}  +   g\pa{\nabla\Delta u, \nabla u}+\alpha\abs{d\p(\nabla u)}^2}.
  \end{align}
  Using \eqref{Eq3.5}, \eqref{Eq3.8} and \eqref{Eq3.10} into \eqref{Eq3.11} we deduce
  \begin{align}\label{Eq3.12}
     \operatorname{div}(Z) &= \frac{2}{u}\sq{\abs{\hess(u)}^2-\frac{\pa{\Delta u}^2}{m}} +\frac{2}{u}\pa{S^\p-(m-1)\frac{\Lambda}{u}}\abs{\nabla u}^2+\frac{2}{u}\alpha\abs{d\p(\nabla u)}^2 \\ \nonumber &-\frac{u}{m}\sq{\Delta\pa{\frac{A}{m-1}} - (2m-3)g\pa{\nabla\pa{\frac{A}{m-1}}, \frac{\nabla u}{u}}}.
  \end{align}
  Inserting \eqref{Eq3.9} we obtain\eqref{Eq3.7_divZ}.
\end{proof}

We are now ready for the

\begin{proof}[Proof of Theorem \ref{Th_13}]
  We set
  \begin{equation}\label{Eq3.13}
    v = \abs{\nabla u}^2 - \frac{1}{m(m-1)}Au^2,
  \end{equation}
  where $A$ has been defined in \eqref{Eq3.9}. We now show that $v$ is constant on $M$: towards this aim, since $M$ is compact, by Lemma \ref{Lemma3.1} we have that there exists a constant $c>0$ such that
  \[
  \abs{\nabla u}^2 \geq c^2
  \]
  on each connected component of $\partial M$. We can thus fix $\delta>0$ sufficiently small such that, for each $0<\eps\leq \delta$ and
  \[
  M_\eps = \set{x\in M : u(x)>\eps}
  \]
  $\frac{1}{u}$ is positive and bounded on $M_\eps$ and $\partial M_\eps$ is a $C^1$-hypersurface. Under assumptions \eqref{Eq15} and \eqref{Eq16} of the theorem, together with $\alpha>0$, equation \eqref{Eq3.7_divZ} of Lemma \ref{Lemma3.4} gives the validity of
  \begin{equation}\label{Eq3.14}
    \Delta v - \frac{1}{u} g\pa{\nabla u, \nabla v} \geq 0 \quad \text{on }\,\, M_\eps.
  \end{equation}
  Hence, by the maximum principle,
  \begin{equation}\label{Eq3.15}
    \max_{M_\eps} v = \max_{\partial M_\eps} v \quad \text{for }\,\, 0<\eps \leq \delta.
  \end{equation}
  Since, for $0<\eps \leq \delta$, $M_\delta \subseteq M_\eps$, from \eqref{Eq3.15} we deduce
   \begin{equation}\label{Eq3.16}
    \max_{\partial M_\delta} v \leq \max_{\partial M_\eps} v.
  \end{equation}
  Now
  \begin{equation}\label{Eq3.17}
    \lim_{\eps \downarrow 0^+}\max_{\partial M_\eps} v \leq \max \abs{\nabla u}^2|_{\partial M},
  \end{equation}
  and therefore
  \begin{equation}\label{Eq3.18}
    \max_{\partial M_\delta} v \leq \max \abs{\nabla u}^2|_{\partial M}.
  \end{equation}
  Using the previous relation and assumption \eqref{Eq17} we infer
  \begin{equation}\label{Eq3.19}
    \max_{\partial M_\delta} v \leq \max_M\set{-\frac{1}{m(m-1)}Au^2}.
  \end{equation}
  We let
  \[
  K = \set{p\in M : -\frac{1}{m(m-1)}A u^2(p) = \max_M\set{-\frac{1}{m(m-1)}A u^2}}.
  \]
  $K$ is closed, and therefore compact (since $M$ is compact), and for $p\in K$ we have $u(p)>0$. It follows that
  \begin{equation}\label{Eq3.20}
    \min_K u = 2\eta
  \end{equation}
  for some $\eta>0$. Then, choosing $\delta \leq \eta$,
  \[
  K \subseteq M_\delta \setminus \partial M_\delta,
  \]
  and using \eqref{Eq3.19}, for each $p \in K$ we have
  \begin{align*}
    \max_K\set{-\frac{1}{m(m-1)}A u^2} &= -\frac{1}{m(m-1)}A u^2(p) \leq v(p) \leq \max_{M_\delta}v \leq \max_M\set{-\frac{1}{m(m-1)}A u^2}.
  \end{align*}
  Hence, for each $p\in K$,
  \[
  v(p) = \max_{M_\delta}\set{-\frac{1}{m(m-1)}A u^2},
  \]
  that is, $v$ assumes its absolute maximum at the interior point $p$ of $M_\delta$. Using \eqref{Eq3.14} and the maximum principle we deduce that
  \begin{equation}\label{Eq3.21}
    v \equiv \max_M\set{-\frac{1}{m(m-1)}A u^2}
  \end{equation}
  on the connected components $C_\delta$ of $M_\delta$ such that $C_\delta \cap K \neq \emptyset$. Letting $\delta \downarrow 0^+$ we infer that \eqref{Eq3.21} holds on $M$ and $v$ is constant; indeed, let $p, q\in \operatorname{int}(M)$: then, by connectedness, there exists a path $\gamma : [0, 1] \ra \operatorname{int}(M)$ such that $\gamma(0) = p$, $\gamma(1)=q$. Since $\gamma\pa{[0, 1]}$ is compact, then $\inf_{\gamma\pa{[0, 1]}} u > 0$; it follows that $\gamma\pa{[0, 1]} \subset M_\delta$ for $0 < \delta \ll 1$. Hence $q$ and $p$ are in the same connected component of $p$. Now choose $p\in K$; since $v$ is constant, the vector field $Z$ defined in \eqref{Eq3.6_defZ} is identically null and \eqref{Eq3.12} gives
  \begin{equation}\label{Eq3.22}
    \hess(u) = \frac{\Delta u}{m}g
  \end{equation}
  and
  \begin{equation}\label{Eq3.23}
    \pa{S^\p-(m-1)\frac{\Lambda}{u}}\abs{\nabla u}^2 = 0
  \end{equation}
  on $\operatorname{int}(M)$ and, by continuity, on $M$ itself. From \eqref{Eq3.8} and \eqref{Eq3.9} we have
  \[
  \Delta u = \pa{S^\p-m\frac{\Lambda}{u}}u = uS^\p-m\Lambda,
  \]
  so that inserting into \eqref{Eq3.22} yields
  \begin{equation}\label{Eq3.24}
    \hess(u) = \pa{\frac{S^\p}{m}u-\Lambda}g \quad \text{on }\,\, M.
  \end{equation}
  Taking the gradient of the constant function $v$, from \eqref{Eq3.13} we obtain
  \begin{equation}\label{Eq3.25}
    0 = 2\hess(u)\pa{\nabla u, \cdot}^\sharp-\frac{u^2}{m(m-1)}\nabla A - \frac{2}{m(m-1)}Au\nabla u.
  \end{equation}
  Thus, using \eqref{Eq3.24} into \eqref{Eq3.25} and \eqref{Eq3.9} we infer
  \[
  \nabla\pa{m\frac{\Lambda}{u}-S^\p} \equiv 0 \quad \text{on }\,\, M.
  \]
  Connectedness of $M$ and \eqref{Eq22} imply
  \[
  m\frac{\Lambda}{u} - S^\p = C^2
  \]
  for some constant $C \neq 0$. We can thus rewrite \eqref{Eq3.24} as
  \[
  \hess(u) = -\frac{C^2}{m}ug,
  \]
  with $u$ non-constant. Furthermore, since $\frac{\Lambda}{u}\in  C^0(M)$, $\imath : \partial M \hookrightarrow M$ is totally geodesic, and therefore we can apply Lemma 3 of Reilly (\cite{R}) to deduce that $M$ is isometric to $S^m_+(C^2)$.
\end{proof}

\begin{rem}
  Note that assumption \eqref{Eq15} and conclusion \eqref{Eq18} of Theorem \ref{Th_13} imply
  \[
  (0<) \,\,C^2 \leq \frac{\Lambda}{u}\quad \text{on }\,\,M.
  \]
\end{rem}
\section{Proof of Theorem \ref{Th_23}}

For the proof of Theorem \ref{Th_23} we need a series of formulas.
\begin{prop}\label{Pr6.1}
  Let $(M, g)$ be a manifold of dimension $m\geq 3$, $X$ a conformal vector field on $M$, $\alpha \in \mathbb{R}$, $\alpha\neq 0$ and $\p : (M, g) \ra (N, h)$ a smooth map. Set
  \begin{equation}\label{Eq6.2}
    w = \operatorname{div}(X).
  \end{equation}
  Then
  \begin{align}\label{Eq6.3}
    \hess(w) &=\frac{S^\p}{(m-1)(m-2)}wg + \frac{m}{2(m-1)(m-2)} g\pa{\nabla S^\p, X}g -\frac{m}{m-2}\mathcal{L}_X\ric^\p \\ \nonumber &-\frac{m}{m-2}\alpha\pa{\mathcal{L}_X\pa{\p^*h}-\frac{1}{2(m-1)}\operatorname{tr}\pa{\mathcal{L}_X\pa{\p^*h}}g},
  \end{align}
  where $\mathcal{L}_X$ denotes the Lie derivative in the direction of $X$ and $\operatorname{tr}$ is the trace with respect to the metric $g$. In particular,
  \begin{equation}\label{Eq6.4}
    \Delta w = -\frac{S^\p}{m-1}w - \frac{m}{2(m-1)}g\pa{\nabla S^\p, X} - \frac{m}{2(m-1)}\alpha \operatorname{tr}\pa{\mathcal{L}_X\pa{\p^*h}}.
  \end{equation}
\end{prop}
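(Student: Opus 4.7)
The strategy is to exploit the conformal Killing equation together with the classical infinitesimal conformal variation formula for the Ricci tensor. First I would translate the conformal condition into $\mathcal{L}_X g = \tfrac{2w}{m}g$: indeed, conformality forces $\mathcal{L}_X g = 2\sigma g$ for some scalar $\sigma$, and tracing with respect to $g$ gives $2w = \operatorname{tr}_g(\mathcal{L}_X g) = 2m\sigma$, whence $\sigma = w/m$. The essential ingredient is the classical identity
\[
\mathcal{L}_X \ric = -\tfrac{m-2}{m}\hess(w) - \tfrac{\Delta w}{m}\,g,
\]
valid for every conformal vector field $X$. This can be derived either by linearizing the conformal transformation law $\widetilde{\ric} = \ric - (m-2)\hess(f) + (m-2)\,df\otimes df - [\Delta f + (m-2)|\nabla f|^2]g$ under $\tilde g = e^{2f}g$ (taking $f=w/m$ as the infinitesimal parameter), or by a direct moving-frame computation based on differentiating $X_{i;j}+X_{j;i}=\tfrac{2w}{m}\delta_{ij}$ and applying the Ricci commutation rules. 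Combining with $\ric^\p=\ric-\alpha\,\p^*h$ and applying $\mathcal{L}_X$, I get
\[
\mathcal{L}_X \ric^\p = -\tfrac{m-2}{m}\hess(w) - \tfrac{\Delta w}{m}g - \alpha\,\mathcal{L}_X(\p^*h),
\]
and isolating the Hessian produces the preliminary formula
\[
\hess(w) = -\tfrac{m}{m-2}\mathcal{L}_X\ric^\p - \tfrac{\Delta w}{m-2}\,g - \tfrac{m\alpha}{m-2}\mathcal{L}_X(\p^*h).
\]

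Next I would derive \eqref{Eq6.4} by tracing this last identity with respect to $g$. Since $\operatorname{tr}_g \hess(w) = \Delta w$ and $\operatorname{tr}_g g = m$, this gives
\[
-\tfrac{2(m-1)}{m}\Delta w = \operatorname{tr}_g(\mathcal{L}_X \ric^\p) + \alpha\,\operatorname{tr}_g(\mathcal{L}_X \p^*h).
\]
On the other hand, since $\mathcal{L}_X g^{ij} = -\tfrac{2w}{m}g^{ij}$, for any symmetric $(0,2)$-tensor $T$ one has the commutator rule $\operatorname{tr}_g(\mathcal{L}_X T) = X(\operatorname{tr}_g T) + \tfrac{2w}{m}\operatorname{tr}_g T$; applied to $T = \ric^\p$, whose trace is $S^\p$, this produces $\operatorname{tr}_g(\mathcal{L}_X \ric^\p) = g(\nabla S^\p, X) + \tfrac{2w}{m}S^\p$. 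Substituting and isolating $\Delta w$ yields exactly \eqref{Eq6.4}.

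Finally, I would substitute the expression for $\Delta w$ just obtained back into the preliminary formula for $\hess(w)$. The contribution $-\tfrac{\Delta w}{m-2}g$ decomposes into the scalar-curvature pieces $\tfrac{S^\p}{(m-1)(m-2)}wg + \tfrac{m}{2(m-1)(m-2)}g(\nabla S^\p,X)g$ plus a term $\tfrac{m\alpha}{2(m-1)(m-2)}\operatorname{tr}_g(\mathcal{L}_X\p^*h)g$ that combines with $-\tfrac{m\alpha}{m-2}\mathcal{L}_X(\p^*h)$ into the traceless-type combination $-\tfrac{m\alpha}{m-2}\bigl[\mathcal{L}_X(\p^*h)-\tfrac{1}{2(m-1)}\operatorname{tr}_g(\mathcal{L}_X\p^*h)g\bigr]$; this recovers \eqref{Eq6.3}. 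The main obstacle is the derivation of the classical identity for $\mathcal{L}_X \ric$; once it is in hand, the rest is careful but mechanical algebra, the only subtle point being to consistently track the $\tfrac{2w}{m}$ contributions arising each time the trace is commuted with $\mathcal{L}_X$.
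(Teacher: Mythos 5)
Your argument is correct, and every algebraic step checks out (the trace commutator $\operatorname{tr}_g(\mathcal{L}_X T)=X(\operatorname{tr}_gT)+\tfrac{2w}{m}\operatorname{tr}_gT$, the resulting form of \eqref{Eq6.4}, and the regrouping of the $\alpha$-terms into the traceless combination all reproduce \eqref{Eq6.3} exactly). The route is genuinely different from the paper's in how the classical ingredient is obtained. The paper first proves the $\p$-free identities \eqref{Eq6.6}--\eqref{Eq6.7} by a long moving-frame computation (iterated Ricci commutation rules, Schur's identity, symmetrization of $w_{ij}$), and then converts to $\p$-quantities by substituting $S=S^\p+\alpha\abs{d\p}^2$ and $\mathcal{L}_X\ric=\mathcal{L}_X\ric^\p+\alpha\mathcal{L}_X(\p^*h)$, which requires the explicit computation $g(\nabla S,X)=g(\nabla S^\p,X)+\alpha\operatorname{tr}(\mathcal{L}_X(\p^*h))-\tfrac{2\alpha}{m}w\abs{d\p}^2$ (equations \eqref{Eq6.14}--\eqref{Eq6.17}). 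You instead take as the single key lemma the linearized conformal variation $\mathcal{L}_X\ric=-\tfrac{m-2}{m}\hess(w)-\tfrac{\Delta w}{m}g$, which packages \eqref{Eq6.6} and \eqref{Eq6.7} into one coordinate-free statement (indeed, its trace is \eqref{Eq6.7} and, once \eqref{Eq6.7} is substituted back, it is equivalent to \eqref{Eq6.6}); you then perform the $\p$-decomposition at the level of $\mathcal{L}_X\ric$ and recover the scalar terms by tracing, thereby bypassing the paper's explicit manipulation of $\p^a_{kt}\p^a_kX_t$. Your approach buys brevity and conceptual clarity (naturality of Ricci plus the conformal transformation law do the work), at the cost of outsourcing the key identity: you only sketch its two possible derivations, whereas the paper, aiming at self-containedness, carries out the full index computation. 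The justification via linearization is legitimate -- since $\mathcal{L}_Xg=\tfrac{2w}{m}g$ is a conformal direction and the first variation of $\ric$ depends only on the first variation of the metric, $\mathcal{L}_X\ric=D\ric\vert_g(\tfrac{2w}{m}g)$ -- but if this proof were to stand alone you should either cite a reference for the identity (it is in Yano's book, which the paper cites) or write out the linearization argument.
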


\begin{rem}
  Note that, when $\p$ is constant, \eqref{Eq6.3} and \eqref{Eq6.4} reduce, respectively, to
  \begin{equation}\label{Eq6.6}
    \hess(w) =\frac{S}{(m-1)(m-2)}wg + \frac{m}{2(m-1)(m-2)} g\pa{\nabla S, X}g -\frac{m}{m-2}\mathcal{L}_X\ric,
  \end{equation}
  \begin{equation}\label{Eq6.7}
     \Delta w = -\frac{S}{m-1}w - \frac{m}{2(m-1)}g\pa{\nabla S, X}.
  \end{equation}
  These two formulas are known (see for instance the book \cite{YB}) and we shall use them to prove \eqref{Eq6.3}. However, for the sake of completeness we shall provide a proof here.
\end{rem}
\begin{proof}[Proof (of Proposition \ref{Pr6.1})]
  To show the validity of \eqref{Eq6.3} we shall use \eqref{Eq6.6}; thus, we begin to prove the latter, and towards this aim we first establish \eqref{Eq6.7}. Since $X$ is conformal,
  \begin{equation}\label{Eq6.8}
    X_{ij}+X_{ji} = \frac{2}{m}w \delta_{ij};
  \end{equation}
  contracting \eqref{Eq6.8} with the Ricci tensor we infer
  \begin{equation}\label{Eq6.9}
    R_{ij}X_{ij}=\frac{w}{m}S.
  \end{equation}
  By the definition of $w$, and using the commutation relation
  \begin{equation}\label{Eq6.11}
    X_{ijk} - X_{ikj} = X_tR_{tijk},
  \end{equation}
  we compute
  \begin{align*}
    \Delta w &= \pa{X_{ii}}_{jj} = \pa{X_{iij}}_{j} = \pa{X_{iji} + X_kR_{kiij}}_j = X_{ijij} - \pa{X_iR_{ij}}_j \\ &= \pa{X_{ij}}_{ij} - R_{ij, j}X_i - R_{ij}X_{ij}.
  \end{align*}
  With the aid of \eqref{Eq6.8}, Schur's identity and \eqref{Eq6.9}, the last equality can be written in the form
  \begin{align}\label{Eq6.12}
    \Delta w &=  \pa{-X_{ji}+\frac{2}{m}w\delta_{ij}}_{ij} - \frac{1}{2}S_iX_i - \frac{S}{m}w \\ \nonumber &= - X_{jiij} + \frac{2}{m}\Delta w - \frac{1}{2}S_iX_i - \frac{S}{m}w.
  \end{align}
  Using the commutation relation (see e.g. \cite{CMbook})
  \[
  X_{ijkl} - X_{ijlk} = X_{tj}R_{tikl} + X_{it}R_{tjkl}
  \]
  we immediately obtain
  \begin{equation}\label{Eq6.13}
    X_{jiij} = X_{jiji},
  \end{equation}
  and inserting into \eqref{Eq6.12} we infer
  \[
  \Delta w = -\pa{X_{jij}}_i + \frac{2}{m}\Delta w - \frac{1}{2}S_iX_i - \frac{S}{m}w.
  \]
  Using once again \eqref{Eq6.11} and \eqref{Eq6.8}, Schur's identity and \eqref{Eq6.9} we get
  \begin{align*}
    \Delta w &= -\pa{X_{jji} + X_kR_{kjij}}_i + \frac{2}{m}\Delta w - \frac{1}{2}S_iX_i - \frac{S}{m}w \\ &= -X_{jjii} - \pa{R_{ki}X_k}_i + \frac{2}{m}\Delta w -\frac{1}{2}S_iX_i - \frac{S}{m}w \\ &= -\Delta w - R_{ki, i}X_k - R_{ki}X_{ki} + \frac{2}{m}\Delta w  -\frac{1}{2}S_iX_i - \frac{S}{m}w \\&= -\Delta w -S_iX_i  - \frac{2}{m}Sw + \frac{2}{m}\Delta w,
  \end{align*}
  that is, \eqref{Eq6.7}. We now compute $\hess(w)$.
  \begin{align*}
    w_{ij} &= X_{llij} = X_{lilj} + X_{tj}R_{tlli} + X_tR_{tlli, j} \\ &=X_{lijl} + X_{ti}R_{tllj}+ X_{lt}R_{tilj} - X_{tj}R_{ti}-X_tR_{ti, j} \\ &=\pa{-X_{il}+\frac{2}{m}X_{tt}\delta_{il}}_{jl} -X_{ti}R_{tj} - X_{tj}R_{ti}+ X_{lt}R_{tilj}-X_tR_{ti, j} \\ &=-X_{iljl} +\frac{2}{m}X_{ttji}-X_{ti}R_{tj} - X_{tj}R_{ti}+ X_{lt}R_{tilj}-X_tR_{ti, j}\\ &= -X_{ijll}-X_{tl}R_{tilj}-X_tR_{tilj, l}+\frac{2}{m}X_{ttji}-X_tR_{ti, j}-X_{ti}R_{tj} - X_{tj}R_{ti}+ X_{lt}R_{tilj} \\&=-X_{ijll}+\frac{2}{m}X_{ttji}-X_{tl}R_{tilj}+ X_{lt}R_{tilj} - X_{t}R_{ij,t} +X_tR_{tj, i}-X_tR_{ti, j}\\&-X_{ti}R_{tj}-X_{tj}R_{ti} \\&=-X_{ijll}+\frac{2}{m}X_{ttji}-X_{lt}R_{ltij}+X_t\pa{R_{tj, i}-R_{ti, j}}-\pa{X_tR_{ij, t}+X_{ti}R_{tj}+X_{tj}R_{it}}\\&=-X_{ijll}+\frac{2}{m}X_{ttji}-X_{lt}R_{ltij}+X_t\pa{R_{tj, i}-R_{ti, j}}-\pa{\mathcal{L}_X\ric}_{ij}.
  \end{align*}
  Since $w_{ij}=w_{ji}$, symmetrizing with respect to the indexes $i$ and $j$ we get
  \begin{align*}
    X_{llij} &= -\frac{1}{2}\pa{X_{ij}+X_{ji}}_{ll} + \frac{2}{m}X_{llij}-\pa{\mathcal{L}_X\ric}_{ij} \\&=-\frac{1}{m}X_{kkll}\delta_{ij}+ \frac{2}{m}X_{llij}-\pa{\mathcal{L}_X\ric}_{ij}.
  \end{align*}
  Hence, rearranging the terms and using \eqref{Eq6.7}, we obtain \eqref{Eq6.6}.

  Starting from \eqref{Eq6.6}, we now prove \eqref{Eq6.3}. Towards this aim, we observe that from $S = S^\p+\alpha\abs{d\p}^2$, differentiating and contracting with $X$ we have
  \begin{equation}\label{Eq6.14}
    S_tX_t = S^\p_tX_t + 2 \alpha\p^a_{kt}\p^a_kX_t.
  \end{equation}
  From the simmetry of $\nabla d\p$ we get
  \begin{equation}\label{Eq6.15}
    \p^a_{kt}X_t = \p^a_{tk}X_t = \pa{\p^a_tX_t}_k-\p^a_tX_{tk},
  \end{equation}
  and therefore
  \[
  2 \alpha\p^a_{kt}\p^a_kX_t = 2 \alpha\pa{\p^a_{t}X_t}_k\p^a_k-2\alpha\p^a_t\p^a_kX_{tk}.
  \]
  Thus, using \eqref{Eq6.8},
  \begin{equation}\label{Eq6.16}
    2 \alpha\p^a_{kt}\p^a_kX_t = 2\alpha\pa{\p^a_{t}X_t}_k\p^a_k-\frac{2\alpha}{m}w\abs{d\p}^2.
  \end{equation}
  Inserting \eqref{Eq6.16} into \eqref{Eq6.14} we infer
  \begin{equation}\label{Eq6.17}
    S_tX_t =  S^\p_tX_t + 2\alpha\pa{\p^a_{t}X_t}_k\p^a_k-\frac{2\alpha}{m}w\abs{d\p}^2.
  \end{equation}
  Now we analyze $\mathcal{L}_X\ric$: since
  \[
  R_{ij} = R^\p_{ij} + \alpha\p^a_i\p^a_j,
  \]
  we have
  \[
  R_{ij, t} = R^\p_{ij, t} + \alpha\pa{\p^a_{it}\p^a_j+\p^a_i\p^a_{jt}}.
  \]
  Hence, using \eqref{Eq6.15},
  \begin{align*}
    X_tR_{ij, t} + X_{ti}R_{tj} + X_{tj}R_{it} &=  X_tR^\p_{ij, t} + \alpha X_t\pa{\p^a_{it}\p^a_j+\p^a_i\p^a_{jt}} + X_{ti}R^\p_{tj} \\ &+\alpha X_{ti}\p^a_t\p^a_j + X_{tj}R^\p_{it}  +\alpha X_{tj}\p^a_i\p^a_t \\ &= \pa{\mathcal{L}_X\ric^\p}_{ij} + \alpha\pa{\p^a_tX_t}_i\p^a_j+\alpha\pa{\p^a_tX_t}_j\p^a_i,
  \end{align*}
  that is,
  \begin{equation}\label{Eq6.18}
    \pa{\mathcal{L}_X\ric}_{ij} = \pa{\mathcal{L}_X\ric^\p}_{ij} + \alpha\pa{\mathcal{L}_X(\p^*h)}_{ij}.
  \end{equation}
  Inserting \eqref{Eq6.14} and \eqref{Eq6.18} in the local form of \eqref{Eq6.6}, we find \eqref{Eq6.3}.
\end{proof}
As a consequence of Proposition \ref{Pr6.1} we have the following
\begin{cor}\label{Cor6.18.1}
  In the assumptions of Proposition \eqref{Pr6.1}, suppose
  \begin{equation}\label{Eq6.18.2}
    \mathcal{L}_X(\p^*h) = 0.
  \end{equation}
  Then
  \begin{eqnarray}
  \label{Eq6.22}  \hess(w) &=& \frac{S^\p}{(m-1)(m-2)}wg + \frac{m}{2(m-1)(m-2)} g\pa{\nabla S^\p, X} -\frac{m}{m-2}\mathcal{L}_X\ric^\p, \\
  \label{Eq6.23}  \Delta w &=& -\frac{S^\p}{m-1}w-\frac{m}{2(m-1)}g\pa{\nabla S^\p, X}.  \end{eqnarray}
\end{cor}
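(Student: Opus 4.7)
The plan is essentially a direct substitution into the formulas established in Proposition \ref{Pr6.1}. The assumption \eqref{Eq6.18.2}, namely $\mathcal{L}_X(\p^*h) = 0$, immediately forces the $(0,2)$-tensor $\mathcal{L}_X(\p^*h)$ to vanish identically; a fortiori, taking the trace with respect to $g$, we also get $\operatorname{tr}\pa{\mathcal{L}_X(\p^*h)} = 0$.

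With these two observations in hand, I would simply substitute into \eqref{Eq6.3}: the entire last line, namely
\[
-\frac{m}{m-2}\alpha\pa{\mathcal{L}_X\pa{\p^*h}-\frac{1}{2(m-1)}\operatorname{tr}\pa{\mathcal{L}_X\pa{\p^*h}}g},
\]
drops out, leaving exactly \eqref{Eq6.22}. Analogously, substituting into \eqref{Eq6.4}, the term
\[
-\frac{m}{2(m-1)}\alpha \operatorname{tr}\pa{\mathcal{L}_X\pa{\p^*h}}
\]
vanishes, yielding \eqref{Eq6.23}.

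There is no real obstacle here: the statement is a formal corollary of Proposition \ref{Pr6.1}, singling out the geometrically natural hypothesis under which the map $\p$ does not contribute extra terms (beyond those already encoded in the $\p$-Ricci tensor and the $\p$-scalar curvature) to the Hessian and the Laplacian of $w = \operatorname{div}(X)$. The only thing worth emphasizing in the write-up is the passage $\mathcal{L}_X(\p^*h) = 0 \Rightarrow \operatorname{tr}\pa{\mathcal{L}_X(\p^*h)} = 0$, which is the single elementary observation that makes both simplifications simultaneous.
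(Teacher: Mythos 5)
Your proposal is correct and coincides with the paper's (implicit) argument: the corollary is stated without proof precisely because it follows by setting $\mathcal{L}_X(\p^*h)=0$, hence also $\operatorname{tr}\pa{\mathcal{L}_X(\p^*h)}=0$, in \eqref{Eq6.3} and \eqref{Eq6.4}. Nothing further is needed.
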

\begin{rem}
  Equation \eqref{Eq6.18.2} geometrically means that $\p^*h$ is invariant under the flow of $X$.
\end{rem}
Note that, for a conformal vector field $X$,
\begin{equation}\label{Eq6.24}
  \mathcal{L}_X\ric^\p = \mathcal{L}_X{\overset{\circ}{\ric^\p}} + \frac{1}{m}\pa{g\pa{\nabla S^\p, X}+\frac{2}{m}S^\p w}g,
\end{equation}
where $\overset{\circ}{\ric^\p}$ is the traceless $\p$-Ricci tensor, so that \eqref{Eq6.22} can be written as
\begin{equation}\label{Eq6.25}
  \hess(w) -w\ric^\p-\Delta wg =  \frac{1}{2}g\pa{\nabla S^\p, X}g -\frac{m}{m-2}\mathcal{L}_X{\overset{\circ}{\ric^\p}} -w {\overset{\circ}{\ric^\p}} .
\end{equation}
Furthermore, suppose that $X$ is conformal and it satisfies
\begin{equation}\label{Eq6.26}
  \nabla \pa{d\p(X)}=0;
\end{equation}
then it also satisfies \eqref{Eq6.18.2}. Indeed, we have
\begin{equation}\label{Eq6.18.3}
  \pa{\mathcal{L}_X(\p^*h)}_{ij} = \pa{\p^a_tX_t}_i\p^a_j + \pa{\p^a_tX_t}_j\p^a_i.
\end{equation}
From the identity
\[
\pa{\pa{\p^a_tX_t}_s}_s=0,
\]
conformality of $X$, the commutation relations
\[
\p^a_{ijk} = \p^a_{ikj} + R_{tijk}\p^a_t + {}^N\!R_{abcd}\p^b_i\p^c_j\p^d_k
\]
and
\[
X_{tst} = X_{tts}+X_iR_{is}
\]
we deduce
\begin{align*}
	0&=\pa{\pa{\p^a_tX_t}_s}_s\\
	&=\p^a_{tss}X_t+\p^a_{ts}X_{ts}+\p^a_{st}X_{st}+\p^a_tX_{tss}\\
	&=\p^a_{sts}X_t+\frac{2}{m}w\p^a_{tt}+\p^a_tX_{tts}\\
	&=X_t\pa{\p^a_{sst}+\p^a_kR_{kt}+{}^NR_{abcd}\p^b_s\p^c_t\p^d_s}+\frac{2}{m}w\p^a_{tt}+\p^a_t\pa{-X_{sts}+\frac{2}{m}w_t}\\
	&=X_t\p^a_{sst}+X_t\p^a_kR_{kt}+{}^NR_{abcd}\p^b_s\p^c_t\p^d_sX_t+\frac{2}{m}w\p^a_{tt}+\frac{2}{m}\p^a_tw_t-\p^a_t\pa{X_{sst}+X_kR_{kt}}\\
	&=X_t\p^a_{sst}+{}^NR_{abcd}\p^b_s\p^c_t\p^d_sX_t+\frac{2}{m}w\p^a_{tt}+\frac{2}{m}\p^a_tw_t-\p^a_tw_t\\
	&=X_t\p^a_{sst}+{}^NR_{abcd}\p^b_s\p^c_t\p^d_sX_t+\frac{2}{m}w\p^a_{tt}-\frac{m-2}{m}\p^a_tw_t.
\end{align*}
Therefore, we have
\begin{align*}
	\frac{m-2}{m}\pa{w\p^a_{tt}+w_t\p^a_t}&=X_t\p^a_{sst}+{}^NR_{abcd}\p^b_s\p^c_t\p^d_sX_t+\frac{2}{m}w\p^a_{tt}-\frac{m-2}{m}\p^a_tw_t\\
	&+\frac{m-2}{m}\pa{w\p^a_{tt}+w_t\p^a_t}\\
	&=w\p^a_{tt}+\p^a_{sst}X_t+{}^NR_{abcd}\p^b_s\p^c_t\p^d_sX_t
\end{align*}
that gives  the validity of the formula
\begin{equation}\label{Eq6.27}
  w\tau(\p) + d\p\pa{\nabla w} = \frac{m}{m-2}\sq{w\tau(\p)+g\pa{\nabla \tau(\p), X}+{}^N\!R_{abcd}\p^b_s\p^c_t\p^d_sX_tE_a},
\end{equation}
where $\set{E_a}$ is the frame dual to the local orthonormal coframe $\set{\omega^a}$ considered on $N$.

Now, the condition
\begin{equation}\label{Eq6.28}
  X \in \operatorname{Ker}\pa{d\p}
\end{equation}
implies \eqref{Eq6.26}, and in this case \eqref{Eq6.27} simplifies to
\begin{equation}\label{Eq6.29}
   w\tau(\p) + d\p\pa{\nabla w} = \frac{m}{m-2}\sq{w\tau(\p)+g\pa{\nabla \tau(\p), X}}.
\end{equation}
We now state the following consequence of equations \eqref{Eq6.25} and \eqref{Eq6.29}.
\begin{prop}
  Let $(M, g)$ be a harmonic-Einstein manifold of dimension $m\geq 3$, with respect to $\p : (M, g) \ra (N, h)$ and $\alpha\in \mathbb{R}$, $\alpha\neq 0$. Let $X$ be  a conformal vector field on $M$ such that $X\in \operatorname{Ker}\pa{d\p}$. Then $w = \operatorname{div}X$ solves the system
  \begin{equation}\label{Eq6.31}
    \begin{cases}
      \hess(w) - \Delta wg -w\ric^\p = 0, \\ w\tau(\p) + d\p(\nabla w) = 0.
    \end{cases}
  \end{equation}
\end{prop}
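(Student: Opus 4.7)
The plan is to recognize this proposition as essentially a corollary of the machinery built up above: the two general identities \eqref{Eq6.25} and \eqref{Eq6.29} are already in place, so it suffices to specialize them to the harmonic-Einstein setting.

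First I would check that the hypotheses of Corollary \ref{Cor6.18.1} are fulfilled. The assumption $X\in\operatorname{Ker}(d\p)$ means $d\p(X)\equiv 0$ as a section of $\p^{-1}TN$, so trivially $\nabla\pa{d\p(X)}=0$, which is precisely \eqref{Eq6.26}. Alternatively, the local expression \eqref{Eq6.18.3} immediately gives $\mathcal{L}_X(\p^*h)=0$, since every factor $(\p^a_tX_t)_i$ vanishes when $\p^a_tX_t\equiv 0$. Consequently both \eqref{Eq6.25} and \eqref{Eq6.29} are at my disposal.

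For the first identity in \eqref{Eq6.31} I would substitute the harmonic-Einstein hypothesis into \eqref{Eq6.25}. The Einstein-type condition $\ric^\p=\frac{S^\p}{m}g$ forces the traceless part $\overset{\circ}{\ric^\p}$ to vanish, which kills both $-\frac{m}{m-2}\mathcal{L}_X\overset{\circ}{\ric^\p}$ and $-w\overset{\circ}{\ric^\p}$; moreover, for $m\geq 3$ the $\p$-scalar curvature $S^\p$ is constant (as recalled immediately after \eqref{Eq26}), so $\nabla S^\p=0$ and the remaining term $\frac{1}{2}g(\nabla S^\p,X)g$ also vanishes. This yields $\hess(w)-\Delta wg-w\ric^\p=0$.

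For the second identity I would plug into \eqref{Eq6.29}. The harmonic-Einstein assumption gives $\tau(\p)\equiv 0$, whence both $w\tau(\p)=0$ and $\nabla\tau(\p)=0$, making the right-hand side of \eqref{Eq6.29} identically zero and producing $w\tau(\p)+d\p(\nabla w)=0$. The whole argument is a bookkeeping step on top of Proposition \ref{Pr6.1} and Corollary \ref{Cor6.18.1}, so I do not expect any genuine obstacle; the only point worth stating carefully is that the covariant derivative of an identically vanishing tensor is zero, which handles both $\nabla S^\p=0$ and $\nabla\tau(\p)=0$ cleanly.
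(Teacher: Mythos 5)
Your proposal is correct and follows essentially the same route as the paper: the paper's proof likewise observes that harmonic-Einstein gives $\overset{\circ}{\ric^\p}\equiv 0$, $\tau(\p)\equiv 0$ and $S^\p$ constant, and then reads off the two equations from \eqref{Eq6.25} and \eqref{Eq6.29}. Your additional verification that $X\in\operatorname{Ker}(d\p)$ yields \eqref{Eq6.26} (and hence $\mathcal{L}_X(\p^*h)=0$) is exactly the observation the paper makes just before stating the proposition.
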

\begin{rem}
  Of course, $w\not\equiv 0$ if and only if $X$ is not a Killing field.
\end{rem}
\begin{proof}
  Simply observe that, since $(M, g)$ is harmonic-Einstein, then $\overset{\circ}{\ric^\p}\equiv 0$, $\tau(\p)\equiv 0$ and $S^\p$ is constant. Then the conclusion follows from equations \eqref{Eq6.25} and \eqref{Eq6.29}.
\end{proof}
\begin{rem}\label{Rem6.35}
  Note that, in the present case, the second equation in \eqref{Eq6.31} becomes $d\p(\nabla w)=0$, that is, $\nabla w \in \operatorname{Ker}\pa{d\p}$.
\end{rem}
Here is another consequence of Proposition \ref{Pr6.1}, extending a well-known result of Yano and Nagano \cite{YN}. The role of the Einstein condition in \cite{YN} is here replaced by the harmonic-Einstein request.
\begin{prop}\label{Prop6.32}
  Let $(M, g)$ be a complete manifold of dimension $m\geq 2$, which is harmonic-Einstein with respect to $\p : (M, g) \ra (N, h)$ and $\alpha\in \mathbb{R}$, $\alpha\neq 0$. If there exists a conformal, non-Killing vector field $X$ on $M$ satisfying
  \begin{equation}\label{Eq6.34}
    \nabla d\p(X) = 0,
  \end{equation}
  then one the following alternatives occurs:
  \begin{itemize}
    \item[i)] $(M, g)$ is $\p$-Ricci-flat;
    \item[ii)] $(M, g)$ is isometric to a Euclidean sphere $\mathbb{S}^m(k)$ of constant sectional curvature $k=\frac{S^\p}{m(m-1)}>0$;
    \item[iii)] $(M, g)$ is isometric to a Hyperbolic space $\mathbb{H}^m(k)$ of constant sectional curvature $k=\frac{S^\p}{m(m-1)}<0$.
  \end{itemize}
\end{prop}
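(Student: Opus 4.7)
The plan is to reduce the statement to an application of the classical Obata-Tashiro rigidity theorem for the equation $\hess(w) = -kwg$. Setting $w = \operatorname{div}(X)$, the goal is to show that $w$ satisfies this Obata-type PDE with the constant $k = S^\varphi/(m(m-1))$; once this is in hand, the three alternatives in the conclusion correspond naturally to the cases $k = 0$, $k > 0$ and $k < 0$.

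First I would unpack the hypotheses. Since $(M,g)$ is harmonic-Einstein with respect to $\varphi$ and $\alpha$, the definition gives $\ric^\varphi = \frac{S^\varphi}{m}g$, $\tau(\varphi) = 0$ and $S^\varphi$ constant; in particular $\nabla S^\varphi = 0$ and the traceless part $\overset{\circ}{\ric^\varphi}$ vanishes identically. The assumption $\nabla(d\varphi(X)) = 0$ says that each local component $\varphi^a_t X_t$ is constant, so formula \eqref{Eq6.18.3} immediately yields $\mathcal{L}_X(\varphi^*h) \equiv 0$ and Corollary \ref{Cor6.18.1} becomes available. Plugging the harmonic-Einstein data into \eqref{Eq6.25} kills the entire right-hand side, leaving $\hess(w) - w\,\ric^\varphi - (\Delta w)g = 0$; tracing this identity (or using \eqref{Eq6.23} with $\nabla S^\varphi = 0$) produces $\Delta w = -S^\varphi w/(m-1)$ and, substituting back together with $\ric^\varphi = (S^\varphi/m)g$, the Obata identity
\[
\hess(w) = -k\,w\,g, \qquad k := \frac{S^\varphi}{m(m-1)}.
\]

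Next I would run the case analysis. Since $X$ is non-Killing, $w \not\equiv 0$. If $k = 0$ then $S^\varphi = 0$ and the harmonic-Einstein condition $\ric^\varphi = (S^\varphi/m)g$ forces $\ric^\varphi \equiv 0$, which is alternative (i); no further work is needed. If $k \neq 0$ the Obata identity forces $w$ to be non-constant (otherwise $\hess(w) = 0$ would give $w \equiv 0$), and completeness of $(M,g)$ allows the invocation of the classical rigidity results: Obata's theorem for $k > 0$, yielding isometry with the round sphere $\mathbb{S}^m(k)$ (alternative (ii)), and the Tashiro-Kanai theorem for $k < 0$, yielding isometry with the hyperbolic space $\mathbb{H}^m(k)$ (alternative (iii)).

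The main technical delicacy I expect is the hyperbolic case $k<0$: Tashiro's classification of complete manifolds admitting a non-constant $w$ with $\hess(w) = -kwg$, $k<0$, a priori includes warped-product models besides $\mathbb{H}^m$, so one must use the additional harmonic-Einstein rigidity to exclude those and select the $\mathbb{H}^m(k)$ alternative. All other steps are essentially algebraic manipulations with the formulas already established in Proposition \ref{Pr6.1} and Corollary \ref{Cor6.18.1}.
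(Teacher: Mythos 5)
Your argument reproduces the paper's proof essentially step for step: \eqref{Eq6.34} gives $\mathcal{L}_X(\varphi^*h)=0$, the harmonic--Einstein data collapse \eqref{Eq6.25} to $\hess(w)-\Delta w\,g-\frac{S^\varphi}{m}wg=0$, tracing and substituting yield $\hess(w)+\frac{S^\varphi}{m(m-1)}wg=0$ with $w=\operatorname{div}X\not\equiv 0$ forced to be non-constant unless $S^\varphi=0$, and the three alternatives follow from the sign of $S^\varphi$ via the Obata--Kanai rigidity theorems, exactly as in the text. The delicacy you flag for $S^\varphi<0$ (Tashiro's classification also allows warped products over an arbitrary complete fibre, which the harmonic--Einstein condition alone does not exclude) is genuine, but the paper does not address it either --- it simply cites Kanai for both signs --- so on this point your write-up is no less complete than the original.
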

\begin{proof}
  From \eqref{Eq6.34} we know that $\mathcal{L}_X(\p^*h)=0$, $S^\p$ is constant and $\overset{\circ}{\ric^\p}\equiv 0$, thus from \eqref{Eq6.25} we deduce
  \begin{equation}\label{Eq6.36}
    \hess(w) - \Delta wg-\frac{S^\p}{m}wg =0,
  \end{equation}
  with $w=\operatorname{div}X$. Tracing \eqref{Eq6.36} gives
  \begin{equation}\label{Eq6.37}
    \Delta w = - \frac{S^\p}{m-1}w;
  \end{equation}
  inserting into \eqref{Eq6.36} yields
  \begin{equation}\label{Eq6.38}
    \hess(w) + \frac{S^\p}{m(m-1)}wg =0.
  \end{equation}
  Suppose now that $(M, g)$ is not $\p$-Ricci flat: then $w$ cannot be constant, because otherwise $w=a\neq 0$, since $X$ is not Killing, and from \eqref{Eq6.37} $S^\p\equiv 0$, contradicting the fact that $(M, g)$ is not $\p$-Ricci flat. Thus $w$ is a non-constant solution of \eqref{Eq6.38} and either $S^\p>0$ or $S^\p<0$, corresponding, by a theorem of Kanai \cite{Kan}, to cases ii) and iii), respectively.
\end{proof}
\begin{rem}
  In particular, in cases ii) and iii), the constant sectional curvatures are given by $\frac{S^\p}{m(m-1)}$ and
  \[
  \ric = \frac{S^\p}{m}g.
  \]
  Thus, using the harmonic-Einstein condition, we deduce
  \[
  \frac{S^\p}{m}g = \ric^\p = \ric -\alpha\p^*h = \frac{S^\p}{m}g-\alpha\p^*h
  \]
  and since $\alpha\neq 0$
  \[
  \p^*h \equiv 0,
  \]
  so that $\p$ is constant.
\end{rem}
\begin{rem}
  In case $M$ is compact, only alternative ii) can occur. Indeed, integrating \eqref{Eq6.37} on $M$, we obtain
  \[
  \int_M \abs{\nabla w}^2 = \frac{S^\p}{m-1}\int_M w^2,
  \]
  showing, since $w\not\equiv 0$, that $S^\p\geq 0$. However, $S^\p =0$ implies, by \eqref{Eq6.37}, $\Delta w=0$ on $M$, so that $w$ is constant. Thus
  \[
  w\operatorname{Vol}(M) = \int_M \operatorname{div} X = 0,
  \]
  and then $w=0$, which is a contradiction. It follows that $S^\p>0$.
\end{rem}
We need now a further preliminary result.
\begin{lemma}\label{Le6.45}
  Let $(M, g)$ be a manifold of dimension $m\geq 2$, $\p : (M, g) \ra (N, h)$ be a smooth map and $X$ a vector field on $M$. Then
  \begin{align}\label{Eq6.46}
    \operatorname{div}\pa{\overset{\circ}{\ric^\p}\pa{X, \,}^\sharp} = \frac{(m-2)}{2m}g\pa{X, \nabla S^\p} + \frac{1}{2}\operatorname{tr}\pa{\overset{\circ}{\ric^\p}\circ\mathcal{L}_Xg} - \alpha h\pa{\tau(\p), d\p(X)}.
  \end{align}
\end{lemma}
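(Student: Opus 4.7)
The plan is to compute the divergence in a local orthonormal frame, splitting it via the product rule into a ``$\nabla\overset{\circ}{\ric^\p}$'' piece and a ``$\nabla X$'' piece, and then handle each piece with a single observation. Writing $T_{ij} = R^\p_{ij} - \frac{S^\p}{m}\delta_{ij}$ for the components of $\overset{\circ}{\ric^\p}$, the vector field to be differentiated has components $T_{ij}X_j$, so
\[
\operatorname{div}\pa{\overset{\circ}{\ric^\p}(X,\cdot)^\sharp} = T_{ij,i}X_j + T_{ij}X_{j,i}.
\]

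For the first piece I would invoke the $\p$-Schur identity \eqref{phiS}, which after symmetry of $\ric^\p$ gives $R^\p_{ij,i} = \frac{1}{2}S^\p_j - \alpha\,\p^a_{tt}\p^a_j$. Subtracting the $\frac{1}{m}S^\p_j$ coming from the $\frac{S^\p}{m}g$ part of $T$ immediately produces
\[
T_{ij,i}X_j = \frac{m-2}{2m}\,g(X,\nabla S^\p) - \alpha\,\p^a_{tt}\,(d\p(X))^a,
\]
and recognizing $\p^a_{tt}$ as the components of $\tau(\p)$ identifies the last term with $\alpha\,h(\tau(\p),d\p(X))$, matching the two outer terms on the right-hand side of \eqref{Eq6.46}.

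For the second piece I would use that $T$ is symmetric, so $T_{ij}X_{j,i} = \frac{1}{2}T_{ij}(X_{i,j}+X_{j,i}) = \frac{1}{2}T_{ij}(\mathcal{L}_X g)_{ij}$, which is precisely $\frac{1}{2}\operatorname{tr}(\overset{\circ}{\ric^\p}\circ\mathcal{L}_X g)$. Summing the two pieces yields \eqref{Eq6.46}. There is no real obstacle here; the only delicate point is keeping the trace-free correction $-\frac{S^\p}{m}g$ in $T$ straight when applying Schur, which is what turns the coefficient $\frac{1}{2}$ from Schur into the $\frac{m-2}{2m}$ appearing in the statement.
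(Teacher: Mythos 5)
Your proposal is correct and follows essentially the same route as the paper: split $\operatorname{div}\pa{\overset{\circ}{\ric^\p}(X,\cdot)^\sharp}$ by the product rule, apply the $\p$-Schur identity \eqref{phiS} to the divergence of the trace-free tensor to produce the $\frac{m-2}{2m}g(X,\nabla S^\p)$ and $-\alpha h(\tau(\p),d\p(X))$ terms, and symmetrize the $\nabla X$ contraction to get $\frac{1}{2}\operatorname{tr}\pa{\overset{\circ}{\ric^\p}\circ\mathcal{L}_Xg}$. No substantive differences.
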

\begin{proof}
  We recall the $\p$-Schur's identity (see equation \eqref{phiS})
  \[
  R^\p_{ij, j} = \frac{1}{2} S^\p_i - \alpha\p^a_{tt}\p^a_i
  \]
  and we compute
  \begin{align*}
    \pa{ \overset{\circ}{R^\p_{ij}}}_j & = \pa{R^\p_{ij, j}-\frac{S^\p_i}{m}}X_i + R^\p_{ij}X_{ij} -\frac{S^\p}{m}\delta_{ij}X_{ij} \\&=X_i\pa{\frac{1}{2}S^\p_i-\alpha\p^a_{tt}\p^a_i}-\frac{1}{m}S^\p_iX_i +\frac{1}{2}R^\p_{ij}\pa{X_{ij}+X_{ji}}-\frac{1}{2}\frac{S^{\varphi}}{m}\delta_{ij}\pa{X_{ij}+X_{ji}}\\&=\frac{m-2}{2m}g\pa{X, \nabla S^\p} -\alpha h\pa{\tau(\p), d\p(X)}+\frac{1}{2}\operatorname{tr}\pa{\overset{\circ}{\ric^\p}\circ\mathcal{L}_Xg},
  \end{align*}
  that is, \eqref{Eq6.46}.
\end{proof}
We are now ready for the
\begin{proof}[Proof (of Theorem \ref{Th_23})]
  We let $u$ be the solution of \eqref{Eq24_systemC} and we set $X=\nabla u$ in \eqref{Eq6.46} to obtain\
  \begin{equation}\label{Eq6.52}
   \operatorname{div}\pa{\overset{\circ}{\ric^\p}\pa{\nabla u, \,}^\sharp} = \frac{(m-2)}{2m}g\pa{\nabla u, \nabla S^\p} +\pa{\abs{\overset{\circ}{\ric^\p}}^2+\alpha\abs{\tau(\p)}^2}u.
  \end{equation}
  We integrate \eqref{Eq6.52} on $M$ and we use the constancy of $S^\p$ to infer
  \[
  \int_M\pa{\abs{\overset{\circ}{\ric^\p}}^2+\alpha\abs{\tau(\p)}^2}u=0.
  \]
  Since $\alpha>0$ and $u>0$ on $M$, from the above we obtain
  \[
  \begin{cases}
    \ric^\p = \frac{S^\p}{m}g, \\ \tau(\p)=0,
  \end{cases}
  \]
  that is, $(M, g)$ is harmonic-Einstein. Suppose now that $u$ is non-constant; from the first equation in \eqref{Eq24_systemC} and the (just proved) validity of \eqref{Eq26} we have
  \[
  \hess(u) = \pa{\frac{S^\p}{m}-\frac{\Lambda}{u}}ug,
  \]
  so that $\nabla u$ is a conformal vector field on $M$. Furthermore,
  \[
  d\p(\nabla u) = -\tau(\p)u \equiv 0,
  \]
  and $\nabla u$ is non-Killing because otherwise, from \eqref{Eq26}, $\hess(u)\equiv 0$ and, by compactness of $M$, $u$ would be constant, a contradiction. From Proposition \ref{Prop6.32} and Remark \ref{Rem6.35} applied to $X=\nabla u$, we have the validity of conclusion ii) of Proposition \ref{Prop6.32}, and hence the theorem.
\end{proof}
\section{Proof of Theorem \ref{t-vol est}}\label{sec t-vol est}
This section is dedicated to the proof of Theorem \ref{t-vol est}, which provides an estimate for the first Jacobi eigenvalue of the  boundary, with respect to a suitable metric, of a Riemannian manifold satisfying the first equation of \eqref{Eq1_system}. The proof is based on the existence of a solution of the $\varphi$-Yamabe problem.\\

Let $(M,g)$ be a Riemannian manifold of dimension $m\geq3$; we will denote by $[g]$ the conformal class of the metric $g$, i.e.,
\[[g]:=\set{\tilde{g}=e^{2f}g,\, f\in\cinf}.\]
Yamabe, Trudinger, Aubin, and Schoen (see \cite{LeeParker} and the references therein for more details) proved that on a closed Riemannian manifold $(M,g)$ it is always possible to find a metric $\tilde{g}\in[g]$ such that the scalar curvature of $\tilde{g}$ is constant. The problem of finding such a metric is known as the \emph{Yamabe problem}: its solution involves variational methods applied to the normalized total scalar curvature functional.\\
Under the conformal change $\tilde{g}=u^{\frac{4}{m-2}}g$, for $m\geq 3$, $u\in C^\infty(M)$ positive, the scalar curvature
transforms according to the well-known formula

\begin{equation}\label{eq_star}
S_{\tilde{g}}=u^{-\frac{m+2}{m-2}}\pa{\dfrac{-4 (m-1)}{m-2}
	\Delta_gu+S_gu}=:u^{-\frac{m+2}{m-2}}L_gu,
\end{equation}
where the operator $L_g$ is  called the \emph{conformal
	Laplacian}. Solving the Yamabe problem, for $m\geq 3$, is
equivalent to finding a positive solution of the equation
\[
L_gu=\lambda u^{\frac{m+2}{m-2}}, \, \mbox{ with }
\lambda\in\erre.
\]
Note that the above equation is
the Euler-Lagrange equation of the normalized
Einstein-Hilbert functional
\[
\mathfrak{S}(\tilde{g})=
\mathrm{Vol}_{\tilde{g}}(M)^{-\frac{m-2}{m}}\int_M
\tilde{S}\, dV_{\tilde{g}},
\]
where $\tilde{g}\in [g]$ and $\tilde{S}$ denotes the scalar curvature with respect to the metric $\tilde{g}$.\\
To prove Theorem \ref{t-vol est} we introduce a suitable modification of the Yamabe invariant, which we will need to obtain a metric on $\partial M$ with constant $\varphi$-scalar curvature.
We now fix some notation. Let $(M,g)$ be a compact Riemannian manifold of dimension $m\geq3$; the Yamabe invariant of $(M,g)$ is defined as
\begin{align*}
	Y(M,[g])=\inf_{\tilde{g}\in[g]}\mathrm{Vol}(M)^{-\frac{m-2}{m}}\int_M \tilde{S}dV_{\tilde{g}}.
\end{align*}
Let $\varphi:(M,g)\ra(N,h)$ be a smooth map, where  $(N,h)$ is a second Riemannian manifold; in analogy with the usual Yamabe invariant,  define the $\varphi$-\emph{Yamabe invariant} by setting
\begin{align}\label{Yamabe phi}
	Y(M,[g])^{\varphi}=\inf_{\tilde{g}\in[g]}\mathrm{Vol}(M)^{-\frac{m-2}{m}}\int_M\tilde{S}^{\tilde{\varphi}}dV_{\tilde{g}},
\end{align}
where $\tilde{\varphi}$ is the map $\varphi:(M,\tilde{g})\ra(N,h)$.
By definition,
\begin{align*}
	Y^{\varphi}(M,[g])&=\inf_{\tilde{g}\in[g]}\mathrm{Vol}(M)^{-\frac{m-2}{m}}\int_M \tilde{S}^{\tilde{\varphi}}dV_g\\
	&=\inf_{\tilde{g}\in[g]}\mathrm{Vol}(M)^{-\frac{m-2}{m}}\pa{\int_M \tilde{S}dV_{\tilde{g}}-\int_M\alpha\abs{d\tilde{\varphi}}_{\tilde{g}}^2dV_{\tilde{g}}}.
\end{align*}
Observe that, when $\alpha>0$, we have
\begin{align*}
	Y^{\varphi}(M,[g])\leq Y(M,[g])\leq Y(\mathbb{S}^m,g_{\mathbb{S}^m}),
\end{align*}
so that
\begin{align*}
	Y^{\varphi}(M,[g])\leq m(m-1)\omega_m^{\frac{2}{m}}.
\end{align*}
\begin{rem}
	We point out that, in analogy with the Yamabe problem, Gursky and LeBrun in \cite{Gursky, GurskyLeBrun2} introduced the following modification of the Yamabe invariant
	\[\widehat{Y}(M,[g]):=\inf_{u\in W^{1,2}(M)}\frac{\int_M u\, \mathcal{L}^t_g u\,dV_g}{\left(\int_M u^{2m/(m-2)}\,dV_g\right)^{(m-2)/2}},\]
	where
	\[\mathcal{L}^t_g=-\frac{4(m-1)}{m-2}\Delta_g+S_g + t \abs{\weyl_g}_g\]
	is a modification of the usual conformal Laplacian. Note that $\widehat{Y}(M,[g])$ is conformally invariant; moreover, making use of a regularization argument, they proved that $\widehat{Y}(M,[g])\leq 0$ and then the  problem of finding a conformal metric $\tilde{g}\in[g]$ which attains $\widehat{Y}(M,[g])$ can be solved adapting the techniques in \cite{LeeParker}.
\end{rem}
For $m\geq 3$, let $\tilde{g}=u^{\frac{4}{m-2}}g$, $u\in C^\infty(M)$ positive; setting $\tilde{\varphi}$ for the map $\varphi$, considered ad a map from $(M, \tilde{g})$ to $(N, g_N)$, using \eqref{eq_star} we have

\begin{equation*}
	\tilde{S}^{\tilde{\varphi}}=u^{-\frac{m+2}{m-2}}\pa{-\frac{4(m-1)}{m-2}\Delta_g u+S^{\varphi}u}.
\end{equation*}

We can rewrite the $\varphi$-{Yamabe invariant} $Y^{\varphi}(M,[g])$ in the form
\begin{align}\label{Y phi u}
	Y^{\varphi}(M,[g])=Y^{\varphi}(u)=\inf_{u\neq0,u\in W^{1,2}}\frac{\int_M\pa{\frac{-4(m-1)}{m-2}u\Delta_g u+S^{\varphi}u^2}dV_g}{||u||_{L^{\frac{2m}{m-2}}}^2}=\inf_{u\neq0,u\in W^{1,2}}\frac{\int_Mu\mathcal{L}_g^{\varphi}u\, dV_g}{||u||^2_{L^{\frac{2m}{m-2}}}},
\end{align}
where $\mathcal{L}_g^{\varphi}$ is the second order elliptic operator

\begin{align}\label{conf lap phi}
	\mathcal{L}^{\varphi}_g:=-\frac{4(m-1)}{m-2}\Delta_g+S^{\varphi};
\end{align}
As a direct consequence of the transformation law of the $\varphi$-scalar curvature we have the following
\begin{lemma}
	Let $m\geq 3$ and $\tilde{g}=u^{\frac{4}{m-2}}g$; then
	$\mathcal{L}^{\tilde{\varphi}}_{\tilde{g}}\psi=u^{-\frac{m+2}{m-2}}\mathcal{L}^{\varphi}_{g}(\psi u)$.
\end{lemma}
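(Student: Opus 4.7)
The cleanest route is to exploit the \emph{functoriality} of the $\varphi$-scalar curvature transformation under composition of conformal changes, avoiding any direct computation of how $\Delta_{\tilde{g}}$ transforms. The key observation is that if $\tilde{g}=u^{\frac{4}{m-2}}g$ and we apply a second conformal change $\hat{g}=\psi^{\frac{4}{m-2}}\tilde{g}$ (with $u,\psi>0$ smooth), then by associativity of scalar multiplication on the metric tensor we have the identity of metrics
\[
\hat{g}=\psi^{\frac{4}{m-2}}u^{\frac{4}{m-2}}g=(\psi u)^{\frac{4}{m-2}}g.
\]

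The plan is then to compute the $\hat{\varphi}$-scalar curvature $\hat{S}^{\hat{\varphi}}$ of $\hat{g}$ in two different ways, using the transformation formula for the $\varphi$-scalar curvature already established in the excerpt, namely
\[
\tilde{S}^{\tilde{\varphi}}=u^{-\frac{m+2}{m-2}}\mathcal{L}^{\varphi}_{g}u.
\]
Applying this formula with base metric $\tilde{g}$ and conformal factor $\psi$ yields
\[
\hat{S}^{\hat{\varphi}}=\psi^{-\frac{m+2}{m-2}}\mathcal{L}^{\tilde{\varphi}}_{\tilde{g}}\psi,
\]
while applying the same formula with base metric $g$ and conformal factor $\psi u$ yields
\[
\hat{S}^{\hat{\varphi}}=(\psi u)^{-\frac{m+2}{m-2}}\mathcal{L}^{\varphi}_{g}(\psi u).
\]
Since $\hat{g}$ is a single well-defined metric and the $\hat{\varphi}$-scalar curvature is an invariant of $(\hat{g},\varphi,\alpha)$, the two right-hand sides must agree; equating them and cancelling the factor $\psi^{-\frac{m+2}{m-2}}$ yields the claimed identity
\[
\mathcal{L}^{\tilde{\varphi}}_{\tilde{g}}\psi=u^{-\frac{m+2}{m-2}}\mathcal{L}^{\varphi}_{g}(\psi u).
\]

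There is essentially no obstacle here: the only point requiring care is that the transformation formula for the $\varphi$-scalar curvature is indeed of the asserted form (which was verified earlier in the excerpt by combining the classical conformal change of $S$ with the fact that $|d\tilde{\varphi}|^2_{\tilde{g}}=u^{-\frac{4}{m-2}}|d\varphi|^2_{g}$, so that the $\alpha|d\varphi|^2$ contribution fits neatly inside the $S^{\varphi}u$ term because $u^{-\frac{4}{m-2}}=u^{-\frac{m+2}{m-2}}\cdot u$). An alternative but essentially equivalent route would be to prove the identity directly, by invoking the standard conformal change of the Laplacian
\[
\Delta_{\tilde{g}}\psi=u^{-\frac{m+2}{m-2}}\left(u\,\Delta_{g}\psi+2g(\nabla u,\nabla\psi)+\tfrac{4}{m-2}\psi\,\Delta_{g}u\cdot\text{(correction)}\right),
\]
expanding $\mathcal{L}^{\tilde{\varphi}}_{\tilde{g}}\psi$, and recognising $\Delta_{g}(\psi u)=\psi\Delta_{g}u+u\Delta_{g}\psi+2g(\nabla u,\nabla\psi)$ after inserting $\tilde{S}^{\tilde{\varphi}}=u^{-\frac{m+2}{m-2}}\mathcal{L}^{\varphi}_{g}u$; however the two-step conformal composition argument above makes the calculation automatic and is the approach I would write up.
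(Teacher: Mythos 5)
Your two-step conformal composition argument ($\hat{g}=\psi^{\frac{4}{m-2}}\tilde{g}=(\psi u)^{\frac{4}{m-2}}g$, then computing $\hat{S}^{\hat{\varphi}}$ both ways) is correct and is precisely the ``direct consequence of the transformation law'' that the paper invokes; the paper gives no written proof, so your write-up fills exactly that gap in the intended way. The only cosmetic point worth adding is that the transformation law is stated for \emph{positive} smooth conformal factors, so your argument first yields the identity for $\psi>0$; it then extends to arbitrary $\psi$ because both sides are linear second-order operators in $\psi$ (e.g.\ write $\psi=(\psi+C)-C$ with $C$ a large positive constant on the compact manifold).
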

Adapting an argument due to Gursky (\cite{Gursky}), we obtain the next
\begin{lemma}
	Each conformal class of a compact manifold $(M^m,g)$, $m\geq 3$, admits a $C^{\infty}$ metric $\tilde{g}=u^{\frac{4}{m-2}}g$ with either $\tilde{S}^{\tilde{\varphi}}>0$, $\tilde{S}^{\tilde{\varphi}}<0$ or $\tilde{S}^{\tilde{\varphi}}\equiv 0$; moreover, this three possibilities are mutually exclusive.
\end{lemma}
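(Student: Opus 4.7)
My plan is to adapt the classical conformal-Laplacian argument (as in \cite{LeeParker,Gursky}) to the $\varphi$-setting, with $\mathcal{L}^{\varphi}_g$ from \eqref{conf lap phi} playing the role of $L_g$. Since $M$ is compact, the operator $\mathcal{L}^{\varphi}_g$ is a self-adjoint, second-order, real elliptic operator with smooth zero-order coefficient $S^{\varphi}$, so its spectrum is discrete and bounded below; let $\lambda_1 := \lambda_1(\mathcal{L}^{\varphi}_g)$ denote its smallest eigenvalue. By the standard simplicity-and-positivity argument for the bottom of the spectrum (elliptic regularity together with the strong maximum principle applied to $|u_1|$, which is a Rayleigh minimizer whenever $u_1$ is), there exists $u_1\in C^\infty(M)$, $u_1>0$ on $M$, with $\mathcal{L}^{\varphi}_g u_1 = \lambda_1 u_1$.

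Setting $\tilde{g}=u_1^{4/(m-2)}g\in[g]$ and invoking the transformation law \eqref{eq_star} gives
\[
\tilde{S}^{\tilde{\varphi}} \;=\; u_1^{-(m+2)/(m-2)}\,\mathcal{L}^{\varphi}_g u_1 \;=\; \lambda_1\, u_1^{-4/(m-2)}.
\]
Since $u_1>0$, this function has constant sign equal to $\sgn(\lambda_1)$, producing $\tilde{S}^{\tilde{\varphi}}>0$, $\tilde{S}^{\tilde{\varphi}}<0$, or $\tilde{S}^{\tilde{\varphi}}\equiv 0$ on $M$ according as $\lambda_1$ is positive, negative, or zero. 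This establishes existence.

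For the mutual exclusivity I would identify $\sgn(\lambda_1)$ with $\sgn\bigl(Y^{\varphi}(M,[g])\bigr)$, which is manifestly conformally invariant. Using the transformation rule $\mathcal{L}^{\tilde{\varphi}}_{\tilde{g}}\psi = v^{-(m+2)/(m-2)}\mathcal{L}^{\varphi}_g(\psi v)$ for $\tilde{g}=v^{4/(m-2)}g$ together with $dV_{\tilde{g}}=v^{2m/(m-2)}dV_g$, the substitution $\phi=\psi v$ yields
\[
\int_M \psi\,\mathcal{L}^{\tilde{\varphi}}_{\tilde{g}}\psi\,dV_{\tilde{g}} \;=\; \int_M \phi\,\mathcal{L}^{\varphi}_g\phi\,dV_g, \qquad \|\psi\|_{L^{2m/(m-2)}(\tilde{g})} \;=\; \|\phi\|_{L^{2m/(m-2)}(g)},
\]
so the Rayleigh expression for $Y^{\varphi}$ in \eqref{Y phi u} is conformally invariant and $\sgn\bigl(\lambda_1(\mathcal{L}^{\tilde{\varphi}}_{\tilde{g}})\bigr)=\sgn\bigl(\lambda_1(\mathcal{L}^{\varphi}_g)\bigr)$. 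Mutual exclusivity is then read off from two elementary Rayleigh bounds: if some representative satisfies $\tilde{S}^{\tilde{\varphi}}>0$ everywhere, then $\lambda_1\geq\inf_M\tilde{S}^{\tilde{\varphi}}>0$, and this sign persists in every representative; conversely, in a hypothetical representative with $\tilde{S}^{\tilde{\varphi}'}<0$ (resp.\ $\equiv 0$), testing the Rayleigh quotient against the constant function $1$ would give $\lambda_1\leq\int_M\tilde{S}^{\tilde{\varphi}'}dV/\mathrm{Vol}<0$ (resp.\ $\lambda_1=0$, since in that case $1$ itself is a positive eigenfunction of $\mathcal{L}^{\tilde{\varphi}'}_{\tilde{g}'}$ with eigenvalue $0$), contradicting the sign of $\lambda_1$. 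The other two cases are symmetric.

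I do not expect a substantial obstacle: the only non-formal input is the positivity and simplicity of the first eigenfunction of $\mathcal{L}^{\varphi}_g$, which follows verbatim from the scalar-curvature ($\varphi$ constant) case since $\mathcal{L}^{\varphi}_g$ has precisely the same analytic structure as the classical conformal Laplacian, with only $S$ replaced by the smooth bounded function $S^{\varphi}$.
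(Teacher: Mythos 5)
Your proposal is correct and follows essentially the same route as the paper: both take the principal eigenvalue of $\mathcal{L}^{\varphi}_g$, use a positive smooth first eigenfunction as the conformal factor so that $\tilde{S}^{\tilde{\varphi}}$ has the sign of that eigenvalue, and deduce exclusivity from the conformal covariance of $\mathcal{L}^{\varphi}_g$ (the paper's version of Gursky's Proposition 3.2). Your extra detail on the Rayleigh-quotient bookkeeping for the exclusivity step is a fuller spelling-out of what the paper asserts more briefly, not a different argument.
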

\begin{proof}
	The proof is based on that of Proposition 3.2 of \cite{Gursky}. Let $\mu(g)$ be the principal eigenvalue of $\mathcal{L}_g^{\varphi}$, namely
	\[
	\mu(g):=\inf_{u\neq 0,u\in W^{1,2}}\frac{\int_M u\mathcal{L}^{\varphi}_gu\,dV_g}{ \abs{\abs{u}}_{2}^2}.
	\]
	Since $\mathcal{L}_g^{\varphi}$ is conformally covariant, the sign of $\mu(g)$ is invariant under conformal changes of $g$. Let $\psi$ denote the first eigenfunction of the eigenvalue $\mu(g)$: by the minimum principle  $\psi$ can be assumed positive. Then
	from the equation
	$$\mathcal{L}_g^{\varphi}\psi=\mu(g)\psi,$$
	we have
	$$
	\Delta \psi=-\frac{(m-2)}{4(m-1)}\pa{S^{\varphi}\psi+\mu(g)\psi}
	$$
	and since $S^{\varphi}$ and $\mu(g)$ are smooth, by a bootstrap argument we conclude that $\psi$ is a smooth function.
	Let $\tilde{g}=\psi^{\frac{4}{m-2}}g$: then
	\begin{align*}
		\tilde{S}^{\tilde{\varphi}}&=\psi^{-\frac{m+2}{m-2}}\mathcal{L}_g^{\varphi}\psi\\
		&=\mu(g)\abs{\abs{\psi}}_{2}^2\psi^{-\frac{4}{m-2}}.
	\end{align*}
	
	Therefore,  $\tilde{S}^{\tilde{\varphi}}$ has the same sign  of $\mu(g)$:  positive, negative or identically zero, and since the sign $\mu(g)$ is invariant under conformal changes of the metric, these three alternatives are mutually exclusive.
	
\end{proof}
Given $(M,g)$ a closed Riemannian manifold of dimension $m\geq3$, it is always possible to find a metric $\tilde{g}\in[g]$ such that $\tilde{S}^{\tilde{\varphi}}$ is constant.
To prove this, it is sufficient to guarantee
\[Y^{\varphi}(M,[g])< Y(\mathbb{S}^m,[g_{\mathbb{S}^m}])\]
and to apply the arguments in \cite{LeeParker} and conclude that in every conformal class there exists a metric with constant $\varphi$-scalar curvature.
However, for the sake of completeness we provide a proof, which is obtained adapting an argument of Lee and Parker (\cite{LeeParker}) to our case.
On a compact manifold the Yamabe problem can be solved using a variational approach, provided $Y(M,[g])<Y(\mathbb{S}^m,g_{\mathbb{S}^m})$.
We focus on Section 4 of \cite{LeeParker}. Note that, by construction, for $\alpha>0$
\[Y^{\varphi}(M,[g])\leq Y(M,[g]).\]
Hence, if
\[Y(M,[g])<Y(\mathbb{S}^m,[g_{\mathbb{S}^m}])\]
we have
\[Y(M,[g])^{\varphi}<Y(\mathbb{S}^m,[g_{\mathbb{S}^m}]).\]
First let us fix some notation, let
\[Q_{\tilde{\varphi}}(\tilde{g})=\frac{\int_M \tilde{S}^{\tilde{\varphi}}dV_{\tilde{g}}}{\pa{\int_MdV_{\tilde{g}}}^{\frac{2}{p}}},\]
where $p=\frac{2m}{m-2}$.
As for the classical Yamabe problem $Q_{\varphi}$ can be written as
\[Q_{\tilde{\varphi}}(\tilde{g})=Q_{\varphi}(u)=\frac{E(u)}{\abs{\abs{u}}^2_p},\]
where
\[E(u)=\int_M\pa{-\frac{4(m-1)}{m-2}\abs{\nabla u}^2+S^{\varphi}u^2}dV_g.\]
Therefore, $u$ is a critical point for the $\varphi$-Yamabe invariant if it satisfies
\begin{equation}\label{e-pdeyphi}
	\mathcal{L}^{\varphi}_gu=\lambda u^{p-1},
\end{equation}
with $\lambda=\frac{E(u)}{\abs{\abs{u}}^p_p}.$\\
Note that, as in the classical Yamabe problem, the inclusion
\begin{equation}\label{inclusion}
	W^{1,2}\subset L^p
\end{equation}
is not compact. Let $\set{u_i}$ be a sequence of smooth functions such that $Q_{\varphi}(u_i)\ra Y^{\varphi}(M,[g])$ and suppose (by homogeneity) that $\abs{\abs{u_i}}_p=1$, for any $i$. Then, by H\"{o}lder's inequality, $\set{u_i}$ is bounded in $W^{1,2}(M)$ and there exists a subsequence converging to a fucntion $u\in W^{1,2}(M)$. However, by \eqref{inclusion}, we cannot guarantee that the constraint $\abs{\abs{u_i}}=1$ is preserved by $u$; moreover $u$ may be identically zero.\\
Following Lee and Parker (\cite{LeeParker})we consider an associated subcritical equation. We define
\[Q^s_{\varphi}(u):=\frac{E(u)}{\abs{\abs{u}}^2_s}\]
for $2\leq s\leq p$ and let
\[Y_s^{\varphi}=\inf\set{Q^s_{\varphi}(u):\,u\in C^{\infty}(X)}.\]
Note that if $u$ is a minimizing function with $\abs{\abs{u}}_s=1$, then
\begin{equation}\label{e-subcritical}
	\mathcal{L}_g^{\varphi}u=Y_s^{\varphi}u^{s-1}.
\end{equation}

We now state the analogous of Proposition 4.2 of \cite{LeeParker} in our setting. We omit the proof since it works exactly as that of \cite{LeeParker}.
\begin{prop}\label{p-4.2}
	For $s\in\sq{2,p}$ there exists a smooth positive solution $u_s$ to the subcritical equation \eqref{e-subcritical}, for which $Q^s_{\varphi}(u)=Y^{\varphi}_s$ and $\abs{\abs{u_s}}_s=1$.
\end{prop}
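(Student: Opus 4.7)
The plan is to apply the direct method of the calculus of variations, exploiting the fact that in the subcritical range $s\in[2,p)$ the Sobolev embedding $W^{1,2}(M)\hookrightarrow L^s(M)$ is compact by Rellich--Kondrachov (the endpoint $s=p$ being genuinely different, as discussed below). First, I would take a minimizing sequence $\set{u_i}\subset C^\infty(M)$ for $Q^s_\varphi$, normalized so that $\abs{\abs{u_i}}_s=1$ and $Q^s_\varphi(u_i)\ra Y^\varphi_s$; since replacing $u_i$ by $\abs{u_i}$ leaves both $E(u_i)$ and $\abs{\abs{u_i}}_s$ unchanged, we may assume $u_i\geq 0$.

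The first main step is a uniform $W^{1,2}$-bound. From the explicit form of $E(u_i)$, the bound $Q^s_\varphi(u_i)\leq Y^\varphi_s+1$ for $i$ large, and H\"older's inequality (which controls $\abs{\abs{u_i}}_2$ by $\abs{\abs{u_i}}_s\mathrm{Vol}(M)^{\frac{s-2}{2s}}$ together with the boundedness of $S^\varphi$), one extracts a constant $C$ with $\abs{\abs{u_i}}_{W^{1,2}}\leq C$. By reflexivity some subsequence satisfies $u_i\rightharpoonup u_s$ in $W^{1,2}$; the compact embedding into $L^s$, valid because $s<p$, upgrades this to strong $L^s$-convergence, so $\abs{\abs{u_s}}_s=1$, $u_s\not\equiv 0$, and $u_s\geq 0$ almost everywhere. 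Weak lower semicontinuity of $\abs{\abs{\nabla\,\cdot\,}}_2^2$ combined with strong $L^2$-convergence then give $E(u_s)\leq\liminf_i E(u_i)=Y^\varphi_s$, so $u_s$ is a minimizer.

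A standard Lagrange multiplier argument on the constraint $\abs{\abs{u}}_s=1$ yields the Euler--Lagrange equation \eqref{e-subcritical} in the weak sense. To promote $u_s$ to a smooth classical solution, I would bootstrap via elliptic regularity for $\mathcal{L}^\varphi_g$: starting from $u_s\in L^s$, the nonlinearity $Y^\varphi_s u_s^{s-1}$ lies in $L^{s/(s-1)}$, and iterating $L^q$-estimates followed by Schauder theory (using $S^\varphi\in C^\infty(M)$) promotes $u_s$ to $C^{2,\alpha}$ and then to $C^\infty(M)$. Positivity is immediate from the strong maximum principle applied to the linear elliptic equation $-\tfrac{4(m-1)}{m-2}\Delta u_s+\pa{S^\varphi-Y^\varphi_s u_s^{s-2}}u_s=0$ satisfied by the nonnegative, non-trivial function $u_s$.

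The main obstacle is the critical endpoint $s=p$, where Rellich--Kondrachov fails: a minimizing sequence can concentrate at a point and lose mass in the weak limit, so the constraint $\abs{\abs{u_s}}_p=1$ may not be preserved under weak convergence and one may end up with $u_s\equiv 0$. This is precisely why the subcritical solutions $u_s$ are produced as a preliminary step; the critical case is then addressed by a careful passage to the limit $s\uparrow p$, under the strict inequality $Y^\varphi(M,[g])<Y(\mathbb{S}^m,[g_{\mathbb{S}^m}])$ and in the spirit of the Aubin--Schoen test-function construction, which is the content of the subsequent part of the program.
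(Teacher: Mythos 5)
Your proposal is correct and is essentially the argument the paper relies on: the paper omits the proof, citing Proposition 4.2 of Lee--Parker, and your direct-method argument (normalized nonnegative minimizing sequence, uniform $W^{1,2}$ bound via H\"older and the boundedness of $S^\varphi$, Rellich--Kondrachov compactness for $s<p$, weak lower semicontinuity, Lagrange multipliers, elliptic bootstrap, strong maximum principle) is precisely that proof transplanted to $\mathcal{L}^\varphi_g$. You are also right that the argument only works on $[2,p)$ and that the stated closed interval $[2,p]$ should be read as excluding the critical endpoint, which is handled separately in Propositions \ref{p-4.4} and \ref{p-yamabe}.
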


\begin{lemma}\label{l-mon}
	If $\mathrm{Vol}(M)=1$, then $\abs{Y^{\varphi}_s}$ is non-increasing as function of $s\in[2,p]$; moreover, if $Y^{\varphi}(M,[g])\geq0$, then $\abs{Y^{\varphi}_s}$ is left-continuous.
\end{lemma}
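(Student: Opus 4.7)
The plan rests on Hölder's inequality in the form: with $\mathrm{Vol}(M)=1$ and $2 \leq s \leq t \leq p$, one has $\norm{u}_s \leq \norm{u}_t$ for every $u \in L^t(M)$. Consequently, for a fixed $u$ with $E(u) \geq 0$ the functional $Q^s_\varphi(u) = E(u)/\norm{u}_s^2$ satisfies $Q^s_\varphi(u) \geq Q^t_\varphi(u)$, while this inequality reverses when $E(u) \leq 0$. This is the only analytic ingredient in the whole argument.

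I would first prove the monotonicity of $\abs{Y^\varphi_s}$ by a short sign analysis. Observe that the first eigenvalue $\mu(g)$ of $\mathcal{L}^\varphi_g$ (which coincides with $Y^\varphi_2$ up to normalization) has a sign that is conformally invariant, and its sign agrees with $\operatorname{sign}(Y^\varphi_s)$ for every $s \in [2,p]$, because $Y^\varphi_s \geq 0$ if and only if $E(u) \geq 0$ for all $u \in W^{1,2}(M)$. Hence $\operatorname{sign}(Y^\varphi_s)$ is independent of $s$. If this common sign is non-negative, the infimum defining $Y^\varphi_s$ may be restricted to the set $\set{E(u) \geq 0}$, and the pointwise inequality $Q^s_\varphi \geq Q^t_\varphi$ on this set yields $Y^\varphi_s \geq Y^\varphi_t \geq 0$; thus $\abs{Y^\varphi_s} = Y^\varphi_s$ is non-increasing. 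If instead the common sign is strictly negative, Proposition \ref{p-4.2} produces minimizers $u_s, u_t$ with $E(u_s), E(u_t) < 0$; using $Q^s_\varphi(u) \leq Q^t_\varphi(u)$ on such $u$ gives $Y^\varphi_s \leq Y^\varphi_t < 0$, so $\abs{Y^\varphi_s} = -Y^\varphi_s$ is again non-increasing.

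For the left-continuity statement, assume $Y^\varphi(M,[g]) \geq 0$, hence $Y^\varphi_s \geq 0$ for all $s \in [2,p]$, and fix $t \in (2,p]$. By the monotonicity just established, $L := \lim_{s \to t^-} Y^\varphi_s$ exists and satisfies $L \geq Y^\varphi_t$. For the reverse inequality, given $\eps > 0$ I would pick $u \in \cinf$ with $Q^t_\varphi(u) \leq Y^\varphi_t + \eps$. Since $u$ is smooth on the compact manifold $M$, the function $s \mapsto \norm{u}_s = \pa{\int_M \abs{u}^s\,dV_g}^{1/s}$ is continuous on $[2,p]$ (the integrand is dominated by $\norm{u}_\infty^p + 1$ and is continuous in $s$), so $Q^s_\varphi(u) \longrightarrow Q^t_\varphi(u)$ as $s \to t^-$. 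Therefore $L \leq \limsup_{s \to t^-} Q^s_\varphi(u) = Q^t_\varphi(u) \leq Y^\varphi_t + \eps$, and letting $\eps \downarrow 0$ yields $L = Y^\varphi_t$.

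The main subtlety I foresee is precisely the sign-stability step inside the monotonicity argument: passing the inequality $Q^s_\varphi \lessgtr Q^t_\varphi$ to the infimum requires knowing a priori the sign of $E$ on (a minimizing sequence for) $Y^\varphi_s$, and this is what the identification $\operatorname{sign}(Y^\varphi_s) = \operatorname{sign}(\mu(g))$ — independent of $s$ — is meant to supply. Everything else reduces to routine Hölder and continuity of $s \mapsto \norm{u}_s$ for a fixed smooth $u$.
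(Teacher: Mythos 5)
Your proof is correct and follows essentially the same route as the paper: Hölder's inequality $\norm{u}_s\leq\norm{u}_{s'}$ under $\mathrm{Vol}(M)=1$ for the monotonicity of $\abs{Y^{\varphi}_s}$, and near-minimizers plus continuity of $s\mapsto\norm{u}_s$ for left-continuity. The only difference is that you spell out the sign case analysis (which the paper compresses into ``it follows that''), and your appeal to Proposition \ref{p-4.2} in the negative case is a harmless but unnecessary detour, since the pointwise inequality on $\set{E<0}$ already suffices.
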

\begin{proof}
	Let $u\in\cinf$, $u\neq 0$. By H\"{o}lder inequality if $s\leq s'$, then $\abs{\abs{u}}_s\leq\abs{\abs{u}}_{s'}$. Now
	\[Q^{s'}_{\varphi}(u)=\frac{\abs{\abs{u}}_{s}^2}{\abs{\abs{u}}_{s'}^2}Q^s_{\varphi}(u)\]
	and then if $s\leq s'$ we have
	$Q^{s'}_{\varphi}\leq Q^s_{\varphi}$. It follows that
	\begin{equation}\label{e-noninc}
		\abs{Y^{\varphi}_{s'}}\leq\abs{Y^{\varphi}_s}.
	\end{equation}
	Assume that $Y^{\varphi}_p(M,[g])=Y^{\varphi}_p\geq0$, by \eqref{e-noninc} we have that
	\[Y^{\varphi}_s\geq0\]
	for every $s\in[2,p]$. Take $s\in\sq{2,p}$. Given $\eps>0$, there exists $u\in C^{\infty}(M)$ such that
	\[Q^{s}_{\varphi}(u)<Y^{\varphi}_s+\eps.\]
	By continuity of $\abs{\abs{u}}_s$, for $s'\leq s$, sufficiently close to $s$ we have
	\[Y^{\varphi}_{s'}\leq Q(u)^{s'}_{\varphi}\leq Y^{\varphi}_s+2\eps\]
	and since $Y^{\varphi}_s$ is non-increasing, we deduce that $Y^{\varphi}_s$ is continuos from the left.
\end{proof}
Following the proof in \cite{LeeParker}, we can now prove that if $Y^{\varphi}(M,[g])<Y(\mathbb{S}^m,[g_{\mathbb{S}^m}])$ holds, than the $\varphi$-Yamabe problem can be solved.
\begin{prop}\label{p-4.4}
	Let $\set{u_s}$ be the collection of functions  given by Proposition \ref{p-4.2}. There are constants $s_0<p$, $r>p$ and $C>0$ such that $\abs{\abs{u_s}}_r\leq C$ for every $s\geq s_0$.
\end{prop}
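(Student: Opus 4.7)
The proof proceeds by Moser iteration on the subcritical equation $\mathcal{L}_g^\varphi u_s = Y_s^\varphi u_s^{s-1}$ with $\|u_s\|_s = 1$, following closely the Lee–Parker strategy for the Yamabe problem. First I would multiply the equation by $u_s^{\beta+1}$ for a parameter $\beta > 0$ to be chosen and integrate by parts on the compact manifold $M$, producing the identity
\begin{equation*}
  \frac{4(m-1)(\beta+1)}{m-2}\int_M u_s^{\beta}|\nabla u_s|^2\, dV_g + \int_M S^{\varphi} u_s^{\beta+2}\, dV_g = Y_s^{\varphi} \int_M u_s^{s+\beta}\, dV_g.
\end{equation*}
Setting $w := u_s^{(\beta+2)/2}$, the first term on the left is a constant multiple of $\|\nabla w\|_2^2$, and $u_s^{\beta+2} = w^2$, so the left-hand side controls $\|\nabla w\|_2^2$ up to the zeroth-order contribution of $S^{\varphi}$, which is uniformly bounded on $M$.

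Next I would apply the Sobolev embedding $\|w\|_p^2 \leq A\|\nabla w\|_2^2 + B\|w\|_2^2$ to convert the gradient estimate into an $L^p$ bound on $w$, equivalently an $L^{(\beta+2)p/2}$ bound on $u_s$. To close the loop, the right-hand side $Y_s^{\varphi}\int_M u_s^{s+\beta}\, dV_g$ must be controlled by $\|w\|_p$ together with lower-order quantities. Since $Y_s^{\varphi}$ is uniformly bounded for $s \in [2,p]$ by Lemma \ref{l-mon}, and since the exponent $s+\beta$ does not exceed $(\beta+2)p/2$ whenever $s \leq p$, H\"older's inequality interpolates $\int_M u_s^{s+\beta}$ between $\|u_s\|_s^s = 1$ and $\|w\|_p^{2\eta}$ with an interpolation exponent $\eta = \eta(s,\beta) < 1$. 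The resulting inequality of the form
\begin{equation*}
  \|w\|_p^2 \leq C_1(\beta)\,\|w\|_p^{2\eta} + C_2(\beta)
\end{equation*}
can be absorbed via Young's inequality to give $\|w\|_p \leq C$, and hence $\|u_s\|_r \leq C$ with $r := (\beta+2)p/2 > p$.

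The crucial obstacle is the uniformity of the constants as $s$ approaches $p$: the interpolation exponent $\eta(s,\beta)$ tends to $1$ in this limit, so $\beta > 0$ must be chosen small enough \emph{depending only on the gap} $p - s_0$ in order to keep $\eta$ bounded strictly below $1$ uniformly for $s \in [s_0, p]$, while still remaining strictly positive so that $r = (\beta+2)p/2 > p$. Pinning $\beta$ down in this quantitative way---exactly as in the Lee--Parker proof---makes the constants $C_1, C_2$ depend only on $(M,g)$, $\|S^{\varphi}\|_\infty$ and $s_0$, yielding the uniform bound claimed in the proposition. It is precisely the strict subcriticality $s_0 < p$ that provides the margin making this delicate balance possible.
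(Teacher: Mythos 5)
There is a genuine gap, and it sits exactly where the whole difficulty of the Yamabe problem lives. Your absorption step relies on an interpolation exponent $\eta(s,\beta)<1$ followed by Young's inequality. Writing $r=(\beta+2)p/2$ and interpolating $\int_M u_s^{s+\beta}$ between $\|u_s\|_s=1$ and $\|u_s\|_r=\|w\|_p^{p/r}$, one finds
\begin{equation*}
\int_M u_s^{s+\beta}\,dV_g\ \leq\ \|w\|_p^{\,2\eta},\qquad \eta=\frac{p\beta}{2\left(p-s+\tfrac{\beta p}{2}\right)}.
\end{equation*}
For each fixed $s<p$ this is indeed $<1$, but $\eta\to 1$ as $s\to p$ and $\eta=1$ at $s=p$, \emph{for every} $\beta>0$; shrinking $\beta$ does not help, since the ratio tends to $2/2=1$ regardless. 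Consequently the constant produced by Young's inequality blows up precisely in the regime $s\to p$ that the proposition is designed to control, and your proposed remedy ("choose $\beta$ small depending on the gap $p-s_0$") addresses the wrong end of the interval $[s_0,p]$: the degeneration occurs near $s=p$, not near $s=s_0$. This is not a technicality --- the problem is scale-critical at $s=p$ ($u_s^{p+\beta}=w^2u_s^{p-2}$ forces the full power $\|w\|_p^2$ on the right), so no interpolation/Young scheme with exponent strictly below the critical one can close uniformly.

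The paper's proof (following Lee--Parker) absorbs the critical term with a \emph{coefficient} strictly less than $1$ rather than an \emph{exponent} strictly less than $1$. Concretely, after the same integration by parts with $w=u_s^{1+\delta}$, one estimates $\int_M w^2u_s^{s-2}\leq\|w\|_p^2\|u_s\|_{(s-2)m/2}^{s-2}\leq\|w\|_p^2$ and invokes the \emph{sharp} Sobolev inequality of Theorems \ref{t-sobcon}--\ref{t-sobsph}, so that the coefficient in front of $\|w\|_p^2$ on the right becomes $(1+\eps)\frac{(1+\delta)^2}{1+2\delta}\frac{Y_s^{\varphi}}{\Lambda}$ with $\Lambda=Y(\mathbb{S}^m,[g_{\mathbb{S}^m}])$. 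The hypothesis $Y^{\varphi}(M,[g])<Y(\mathbb{S}^m,[g_{\mathbb{S}^m}])$, together with the monotonicity and left-continuity of $Y_s^{\varphi}$ from Lemma \ref{l-mon} (plus the separate, easy case $Y^{\varphi}<0$), makes this coefficient uniformly $<1$ for $s\geq s_0$, so the term can be moved to the left-hand side. Your proposal never uses the sharp Sobolev constant, the strict inequality against the sphere, or Lemma \ref{l-mon}; since the conclusion is known to fail without the hypothesis $Y^{\varphi}<Y(\mathbb{S}^m,[g_{\mathbb{S}^m}])$ in the classical setting, an argument that does not invoke it cannot be correct.
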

\begin{proof}
	Let $\delta>0$. We multiply \eqref{e-pdeyphi} times $u_s^{1+2\delta}$ and we integrate
	\begin{align*}
		Y^{\varphi}_s\int_Mu_s^{s+2\delta}dV_g&=\int_M\pa{-\frac{4(m-1)}{m-2}u_{s}^{1+2\delta}\Delta u_s+S^{\varphi}u_s^{2+2\delta}}dV_g\\
		&=\int_M\pa{\frac{4(m-1)}{m-2}u^{2\delta}(1+2\delta)\abs{\nabla u_s}^2+S^{\varphi}u^{2+2\delta}}dV_g.
	\end{align*}
	Let $w=u^{1+\delta}_s$, then we have
	\[\frac{(1+2\delta)}{(1+\delta)^2}\int_X\frac{4(m-1)}{m-2}\abs{\nabla w}^2dV_g=\int_M\pa{Y^{\varphi}_sw^2u^{s-2}-S^{\varphi}w^2}dV_g.\]
	To conclude the proof we recall the following theorems (see \cite{LeeParker} for more details).
	\begin{teo}\label{t-sobcon}
		Let $X$ be a compact manifold. Let $\sigma_M$ be the optimal Sobolev constant. For every $\eps>0$ there exists $C_{\eps}$ such that for every $v\in C^{\infty}(M)$
		\begin{equation}\label{e-sobc}
			\abs{\abs{v}}_{p}^2\leq(1+\eps)\sigma_M\int_M\abs{\nabla v}^2dV_g+C_{\eps}\int_Mv^2dV_g.
		\end{equation}
	\end{teo}
	\begin{teo}\label{t-sobsph}
			The $m$-dimensional Sobolev constant is $\sigma_M=\frac{4(m-1)}{m-2}/\Lambda,$ where $\Lambda=Y(\mathbb{S}^m,[g_{\mathbb{S}^m}])$. Thus, the sharp Sobolev inequality on $\erre^m$ is
			\begin{equation}\label{e-sharpsob}
				\abs{\abs{v}}_p^2\leq\frac{4(m-1)}{\Lambda(m-2)}\int_{\erre^m}\abs{\nabla v}^2dV_g.
			\end{equation}
	\end{teo}
	Combining \eqref{e-sobc}, \eqref{e-sharpsob} we deduce
	\begin{align*}
		\abs{\abs{w}}_p^2&\leq (1+\eps)\frac{4(m-1)}{\Lambda (m-2)}\int_M\abs{\nabla w}^2dV_g+C_{\eps}\int_Mw^2dV_g\\
		&=(1+\eps)\frac{(1+\delta)^2}{1+2\delta}\frac{1}{\Lambda}\int_MY^{\varphi}_sw^2u_{s}^{s-2}dV_g-(1+\eps)\frac{(1+\delta)^2}{1+2\delta}\frac{4(m-1)}{\Lambda(m-2)}\int_XS^{\varphi}w^2dV_g\\&+C_{\eps}\int_Mw^2dV_g\\
		&\leq(1+\eps)\frac{(1+\delta)^2}{1+2\delta}\frac{1}{\Lambda}\int_MY^{\varphi}_sw^2u_{s}^{s-2}dV_g+C_{\eps}'\int_Mw^2dV_g\\
	\end{align*}	
	and using H\"{o}lder inequality
	\begin{align*}
		\abs{\abs{w}}_p^2&\leq (1+\eps)\frac{(1+\delta)^2}{1+2\delta}\frac{Y^{\varphi}_s}{\Lambda}\abs{\abs{w}}_{p}^2\abs{\abs{u_s}}^{s-2}_{\frac{(s-2)m}{2}}+C_{\eps}'\abs{\abs{w}}_{2}^2.
	\end{align*}
	If $Y^{\varphi}(M,[g])<0$, then
	\[\abs{\abs{w}}_p^2\leq C\abs{\abs{w}}^2_2.\]
	If $0\leq Y^{\varphi}(M,[g])<Y(\mathbb{S}^m,[g_{\mathbb{S}^m}])$, since $Y^{\varphi}_s$ is non-increasing and left-continuous, we infer the existence of $s_0<p$, sufficiently close to $p$ such that
	\[\frac{Y^{\varphi}_s}{\Lambda}\leq\frac{Y^{\varphi}_{s_0}}{\Lambda}<1,\]
	for $s\geq s_0$. Moreover, since $\frac{(s-2)m}{2}<s$, by H\"{o}lder's inequality we have
	\[\abs{\abs{u_s}}_{\frac{(s-2)m}{2}}\leq\abs{\abs{u_s}}_s=1,\]
	therefore
	\[\abs{\abs{w}}_p^2\leq C\abs{\abs{w}}^2_2.\]
	To conclude the proof observe that
	\[\abs{\abs{w}}_2=\abs{\abs{u_s}}_{2(1+\delta)}^{1+\delta}\leq\abs{\abs{u_s}}_s^{1+\delta}.\]
	Therefore, $\abs{\abs{w}}_p$ is bounded independently of $s$.
\end{proof}
\begin{prop}\label{p-yamabe}
	Let $\set{u_s}$ be the functions given in Proposition \ref{p-4.2}. As $s\ra p$, $u_s$ converges (up to a subsequence) to a positive function $u\in C^{\infty}(X)$, which satisfies
	\begin{equation*}
		Q(u)^p_{\varphi}=Y^{\varphi}(M,[g]),\quad\mathcal{L}^{\varphi}u=Y^{\varphi}(M,[g])u^{p-1},
	\end{equation*}
	hence the metric $\tilde{g}=u^{p-2}g$ has constant $\varphi$-scalar curvature.
\end{prop}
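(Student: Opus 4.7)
The plan is to carry out the subcritical-limit argument of Aubin--Trudinger--Schoen--Lee--Parker in our $\varphi$-setting. Setting $f_s := Y^\varphi_s u_s^{s-1} \in C^\infty(M)$, each $u_s$ satisfies the linear elliptic equation
\begin{equation*}
  -\frac{4(m-1)}{m-2}\Delta_g u_s + S^\varphi u_s = f_s,
\end{equation*}
so $L^q$-theory for $\Delta_g$ is available. First I would combine the uniform $L^r$ bound $\norm{u_s}_r\leq C$ of Proposition \ref{p-4.4} (for some $r>p$ and all $s\geq s_0$) with the uniform boundedness of $\abs{Y^\varphi_s}$ (Lemma \ref{l-mon}) to obtain a uniform $L^{r/(s-1)}$ bound on the right-hand side $f_s$. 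For $s$ sufficiently close to $p$ the exponent $r/(s-1)$ stays strictly above $1$, so Calder\'on--Zygmund regularity gives a uniform $W^{2,r/(s-1)}$ bound on $u_s$. Bootstrapping in the standard way (plugging the improved integrability of $u_s$ back into $f_s$) raises the integrability to every $q<\infty$, and then Schauder estimates yield a uniform $C^{2,\alpha}$ bound on $u_s$ for all $s$ close enough to $p$.

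Next, by Arzel\`a--Ascoli I would extract a subsequence $u_{s_k}\to u$ in $C^2(M)$ with $s_k\to p$, and pass to the limit in \eqref{e-subcritical}: using left-continuity and monotonicity of $Y^\varphi_s$ (Lemma \ref{l-mon}), $Y^\varphi_{s_k}\to Y^\varphi(M,[g])$, so $u$ solves
\begin{equation*}
  \mathcal{L}^\varphi_g u = Y^\varphi(M,[g])\, u^{p-1}.
\end{equation*}
Since $u_s\geq 0$, we have $u\geq 0$; elliptic regularity applied to this equation upgrades $u$ to $C^\infty(M)$. Moreover $C^2$-convergence gives $\norm{u}_p=\lim\norm{u_{s_k}}_{s_k}=1$ provided $u\not\equiv 0$, so $u$ realizes the infimum $Y^\varphi(M,[g])$. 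Finally, $u>0$ on $M$ follows from the strong maximum principle applied to $-\Delta_g u + c(x)u=0$ after absorbing the zero-order terms into $c(x)$.

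The one genuinely delicate step is ruling out the possibility $u\equiv 0$, i.e.\ preventing concentration of mass of $u_s^p$ in a vanishing region (a ``bubble''). This is exactly where the hypothesis $Y^\varphi(M,[g])<Y(\mathbb{S}^m,[g_{\mathbb{S}^m}])$ must be used. Following the test-function argument in \cite{LeeParker}, one shows that if $u\equiv 0$ then a concentration-compactness analysis built on the sharp Sobolev inequality \eqref{e-sharpsob} forces $Y^\varphi(M,[g])\geq Y(\mathbb{S}^m,[g_{\mathbb{S}^m}])$, contradicting our standing strict inequality. Thus $u\not\equiv 0$, and the proposition is proved. I expect this concentration step to be the main obstacle, since everything else is a routine transplantation of the Lee--Parker framework to the $\varphi$-scalar curvature setting, while ruling out the bubble requires carefully tracking the sharp constant through the $\varphi$-Yamabe functional.
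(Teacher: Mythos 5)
Your proposal follows the same Lee--Parker subcritical-limit scheme as the paper: uniform $L^r$ bound from Proposition \ref{p-4.4}, elliptic bootstrap to uniform $C^{2,\alpha}$ estimates (the paper simply cites Theorem 4.1 of \cite{LeeParker} here, whereas you spell out the Calder\'on--Zygmund/Schauder iteration), Arzel\`a--Ascoli, passage to the limit in \eqref{e-subcritical}, and identification of the limiting constant via Lemma \ref{l-mon}. The one place where you diverge is the treatment of non-vanishing, and there your account is slightly off. You write that $\norm{u}_p=\lim\norm{u_{s_k}}_{s_k}=1$ holds ``provided $u\not\equiv 0$'' and then propose a separate concentration-compactness/bubbling analysis to exclude $u\equiv 0$. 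This extra step is unnecessary and reflects a misplacement of where the sharp constant enters: the strict inequality $Y^{\varphi}(M,[g])<Y(\mathbb{S}^m,[g_{\mathbb{S}^m}])$ has already been spent in Proposition \ref{p-4.4} to produce the uniform $L^r$ bound with $r>p$, which is precisely what forecloses concentration. Once you have uniform $C^{2,\alpha}$ bounds on the compact manifold $M$, the convergence $u_{s_k}\to u$ is uniform, so $\int_M u_{s_k}^{s_k}\,dV_g\to\int_M u^p\,dV_g$ with no proviso, giving $\norm{u}_p=1$ and hence $u\not\equiv 0$ directly; positivity then follows from the strong maximum principle as you say. A second, minor point: your one-line claim that Lemma \ref{l-mon} gives $Y^{\varphi}_{s_k}\to Y^{\varphi}(M,[g])$ glosses over the case $Y^{\varphi}(M,[g])<0$, where left-continuity is not asserted by that lemma; the paper handles this separately by noting that $Y^{\varphi}_s$ is then increasing in $s$, so $\lambda\leq Y^{\varphi}(M,[g])$, and equality follows because $Y^{\varphi}(M,[g])$ is the infimum of $Q^p_{\varphi}$ and $\lambda=Q^p_{\varphi}(u)$. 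With these two adjustments your argument matches the paper's proof.
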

\begin{proof}
	By Proposition \ref{p-4.4}, the sequence $\set{u_s}$ is uniformly bounded in $L^r(M)$ and by Theorem 4.1 of \cite{LeeParker}, they are uniformaly bounded in $C^{2,\alpha}$ and by Arzela-Ascoli Theorem a subsequence converges in $C^2$ norm to a function $u\in C^2(X)$ such that
	\begin{equation*}
		\mathcal{L}^{\varphi}_gu=\lambda u^{p-1}\quad Q(u)=\lambda,
	\end{equation*}
	where $\lambda=\lim_{s\ra p}Y^{\varphi}_s$. If $Y(M,[g])\geq 0$, by Lemma \ref{l-mon} $Y^{\varphi}_s$ is left-continuous, so that $\lambda=Y(M,[g])$. If $Y(M,[g])<0$, since $\abs{Y^{\varphi}_s}$ is non-increasing, we have that $Y^{\varphi}_s$ is increasing. Thus, $\lambda\leq Y(X,[g])$ and since $Y(M,[g])$ is the infimum of $Q^p_{\varphi}$, equality holds.
\end{proof}

Note that when $Y^{\varphi}(M,[g])=Y(\mathbb{S}^m,[g_{\mathbb{S}^m}])$, by definition of the $\varphi$-Yamabe invariant, we have that
\[Y^{\varphi}(M,[g])=Y^{\varphi}(M,[g])=Y(\mathbb{S}^m,[g_{\mathbb{S}^m}])\]
hence, by the classical results of Aubin and Schoen we have that $(M,[g])$ is conformally equivalent to the standard sphere. However, note that when equality holds it is not guaranteed that $\tilde{g}\in[g]$, such that $\tilde{S}^{\varphi}$ is constant coincide with the Yamabe metric.
\begin{rem}
	Note that the sign of the $\varphi$-Yamabe invariant is conformally invariant and it determines the sign of the $\varphi$-scalar curvature relative
	to the metric $\tilde{g}\in [g]$ that attains the minimum.
\end{rem}

Before proving Theorem \ref{t-vol est}, we racall that from Gauss equation, we have
\begin{align*}
	R_{ijkl}^{\partial M}&=R_{ijkl}-\mathrm{II}_{il}\mathrm{II}_{jk}+\mathrm{II}_{ik}\mathrm{II}_{jl};\\
	R_{ik}^{\partial M}&=R_{ik}^{\partial M}-R_{imkm}-\mathrm{II}_{il}\mathrm{II}_{lk}+\mathrm{II}_{ik}H(m-1);\\
	S^{\partial M}&=S-2R_{mm}+(m-1)(m-2)H^2,
\end{align*}
which imply
\begin{align}\label{S phi boundary}
	S^{\partial M,\varphi}&=S^{\varphi}+\alpha(\varphi^a_m)^2-2R_{mm}+(m-1)(m-2)H^2.
\end{align}
Moreover, we prove the following
\begin{lemma}\label{l-extension}
	Let $(M,g)$ be a compact Riemannian manifold with boundary $\partial M$ and let $f\in C^{\infty}(\partial M)$. Then, there exists $\ol{f}\in C^{\infty}(M)$ such that $\ol{f}|_{\partial M}=f$.
\end{lemma}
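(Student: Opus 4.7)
The plan is to use a collar neighborhood of the boundary together with a smooth cutoff function to realize the extension.

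First, I would invoke the collar neighborhood theorem: since $M$ is a compact smooth manifold with boundary, there exist $\eps>0$, an open neighborhood $U$ of $\partial M$ in $M$, and a diffeomorphism $\Phi : \partial M \times [0,\eps) \to U$ with $\Phi(p,0)=p$ for every $p\in\partial M$. Using this collar, I would define $\tilde f : U \to \erre$ by
\[
\tilde f\pa{\Phi(p,t)} = f(p), \qquad (p,t)\in\partial M\times[0,\eps).
\]
Smoothness of $\tilde f$ on $U$ follows from the smoothness of $f$ and $\Phi$, and by construction $\tilde f|_{\partial M}=f$. Note that $\tilde f$ is a smooth function defined only on the open set $U$, so an additional step is needed to extend it to the whole manifold.

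Next, I would fix $0<\eps'<\eps$ and choose a smooth cutoff $\chi\in C^\infty(M)$ with $0\leq\chi\leq 1$, $\chi\equiv 1$ on the smaller collar $\Phi\pa{\partial M\times[0,\eps'/2]}$, and $\supp(\chi)\subseteq\Phi\pa{\partial M\times[0,\eps')}\subseteq U$. Such a $\chi$ is obtained by transporting a standard one-dimensional bump function in the $t$ variable through $\Phi$ and extending it by zero outside $U$. I would then set
\[
\ol{f}(x) =
\begin{cases}
\chi(x)\,\tilde f(x), & x\in U,\\
0, & x\in M\setminus\supp(\chi).
\end{cases}
\]
Since the two prescriptions agree on the overlap $U\setminus\supp(\chi)$, where both vanish, and each is smooth on an open set, $\ol f\in C^\infty(M)$. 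On $\partial M$, $\chi\equiv 1$ and $\tilde f=f$, so $\ol f|_{\partial M}=f$, which is the desired conclusion.

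There is no real obstacle: the only subtlety is guaranteeing smoothness across the boundary of $\supp(\chi)$, and this is precisely what the cutoff $\chi$ is designed for, since $\chi$ vanishes to infinite order on $\partial\supp(\chi)$. Compactness of $M$ is used implicitly to ensure that the collar neighborhood theorem applies and that finitely many bumps suffice, although for this particular statement a single collar suffices.
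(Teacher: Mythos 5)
Your proof is correct and follows essentially the same route as the paper: extend $f$ constantly along a collar (tubular) neighborhood of $\partial M$ and then damp it to zero away from the boundary. The only cosmetic difference is that you use a single cutoff in the collar parameter $t$ where the paper uses a partition of unity subordinate to a finite cover of $\partial M$; your version is, if anything, slightly cleaner since $\chi\equiv 1$ on a full neighborhood of the boundary makes the verification $\ol{f}|_{\partial M}=f$ immediate.
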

\begin{proof}
	Let $U_\eps$ be a tubular neighborhood of the boundary $\partial M$, namely, we consider
	\[U_\eps\cong\partial M\times[0,\eps)\]
	for $\eps>0$ sufficiently small.
	In particular we can identify a point $x\in U$ as $(y,t)$, where $y\in \partial M$ and $t\in[0,\eps)$.
	We define $F:U_\eps\ra\erre$, such that for every $x\in U_\eps$, $F(x)=F((y,t))=f(y)$, note that $F\in C^{\infty}(U_\eps)$ and $F|_{\partial M}\equiv f$.\\
	Let $\mathcal{U}=\set{U_i}_{i}$ be an open covering of the boundary such that $U_i\subset U_{\eps}$ for every $i$. Since $\partial M$ is compact, we can assume it is finite. We consider a partition of unit $\set{\psi_{i}}_i$ subordinate to $\mathcal{U}$; by construction $\mathrm{supp}(\psi_{i})\subseteq U_i$. Then, we define
	\begin{align*}
		\ol{f}(x)=	\begin{cases}
			\sum_i \psi_{i}(x)F(x)\quad\text{if }x\in U_{\eps};\\
			0\quad\text{if }x\in M\setminus U_{\eps}.
		\end{cases}
	\end{align*}
	By construction when $y\in\partial M$ $\ol{f}(y)=\sum_i \psi_{i}(y)f(y)=f(y)$ and $\ol{f}\in C^{\infty}(M)$.
\end{proof}
We are now ready to prove Theorem \ref{t-vol est}.
\begin{proof}[Proof of Theorem \ref{t-vol est}]
	By Proposition \ref{p-yamabe} we can always consider a metric $\tilde{g}_b\in[g|_{\partial M}]$ such that the $\varphi$-scalar curvature of the boundary associated to $\tilde{g}_b$ is constant.
	Since $\tilde{g}_b\in[g|_{\partial M}]$, there exists a function $f$ on $\partial M$ suh that $\tilde{g}_b=e^{2f}g$. By Lemma \ref{l-extension}, we can exten $f$ on $M$; in particular there exists $\ol{f}\in C^{\infty}(M)$ such that $\ol{f}|_{\partial M}=f$. Let us consider the metric $\tilde{g}:=e^{2\ol{f}}g$. By construction we have $\tilde{g}|_{\partial M}=\tilde{g}_b$ on the boundary and $\tilde{S}^{\tilde{\varphi},\partial M}$ is constant. Since $(M,g)$ is totally geodesic due to the validity of \eqref{e-andradephi}, the second fundamental form $\tilde{II}$ of $(M,\tilde{g})$ writes as
	$$
	\mathrm{\tilde{II}}_{ab}=e^{-\ol{f}}\pa{\mathrm{II}_{ab}+\frac{\partial \ol{f}}{\partial\nu}g_{ab}};
	$$
	and the mean curvature $\tilde{H}$ is given by
	$$\tilde{H}=\frac{\mathrm{tr}(\tilde{\mathrm{II}})}{m-1}=e^{-\ol{f}}\frac{\partial \ol{f}}{\partial\nu}.$$
	Therefore, by definition of $J_{\tilde{g}}$, we deduce
	\begin{align*}
		J_{\tilde{g}}\phi=\Delta_{\tilde{g}} \phi+\pa{\tilde{\ric}(\nu,\nu)+\frac{\tilde{H}^2}{(m-1)^2}}\phi,
	\end{align*}
	where $\tilde{\ric}$ denotes the Ricci tensor with respect to $\tilde{g}$.
	Moreover,
	\begin{align*}
		(\lambda_1)_{\tilde{g}}\int_{\partial M}\phi^2\,dV_{\tilde{g}_b}&\leq-\int_{\partial M}\phi J_{\tilde{g}}\phi\, dV_{\tilde{g}_b} \\
		&=\int_{\partial M}\pa{\abs{\nabla_{\tilde{g}} \phi}_{\tilde{g}}^2-\pa{\tilde{\ric}(\nu,\nu)+\frac{\tilde{H}^2}{(m-1)^2}}\phi^2}dV_{\tilde{g}_b}.
	\end{align*}
	Taking $\phi=1$ in the above equation and using \eqref{S phi boundary}, we get that when $\alpha>0$ the following inequalities hold
	\begin{align*}
		(\lambda_1)_{\tilde{g}}\mathrm{Vol}_{\tilde{g}_b}(\partial M)&\leq
		\frac{1}{2}\int_{\partial M}\tilde{S}^{\tilde{\varphi},\partial M}dV_{\tilde{g}_b}-\frac{1}{2}\int_{\partial M}\tilde{S}^{\tilde{\varphi}} dV_{\tilde{g}_b}-\int_{\partial M}\alpha(\tilde{\varphi}^a_m)^2dV_{\tilde{g}_b}-\int_{\partial M}\frac{\tilde{H}^2}{(m-1)^2}dV_{\tilde{g}_b}\\
		&\leq \frac{1}{2}\int_{\partial M}\tilde{S}^{\tilde{\varphi},\partial M}dV_{\tilde{g}_b}-\frac{1}{2}\int_{\partial M}\tilde{S}^{\tilde{\varphi}} dV_{\tilde{g}_b}\\
		&\leq \frac{1}{2}\int_{\partial M}\tilde{S}^{\tilde{\varphi},\partial M}dV_{\tilde{g}_b}-\frac{\tilde{S}^{\tilde{\varphi}}_{min}}{2}\mathrm{Vol}_{\tilde{g}_b}(\partial M)\\
		&\leq\frac{1}{2}Y^{\varphi}(\partial M,[g|_{\partial M}])\mathrm{Vol}_{\tilde{g}_b}(\partial M)^{\frac{m-3}{m-1}}-\frac{\tilde{S}^{\tilde{\varphi}}_{min}}{2}\mathrm{Vol}_{\tilde{g}_b}(\partial M),
	\end{align*}
	which implies
	\begin{align}\label{ineq lambda 1}
		(\lambda_1)_{\tilde{g}} \leq\frac{1}{2}\pa{(m-1)(m-2)\omega_{m-1}^{\frac{2}{m-1}}\mathrm{Vol}_{\tilde{g}_b}(\partial M)^{-\frac{2}{m-1}}-\tilde{S}^{\tilde{\varphi}}_{min}}.
	\end{align}
	If equality holds,
	\[Y^{\varphi}(\partial M,[g|_{\partial M}])=Y(\partial M,[g|_{\partial M}])=Y(\mathbb{S}^{m-1},[g_{\mathbb{S}^n}])\]
	and it follows directly by the classical results of Aubin and Schoen (see \cite{LeeParker} and the references therein) that $\partial M$ is conformally equivalent to a sphere. Furthermore, let $\hat{g}\in[g|_{\partial M}]$ be the metric which achieves the infimum of $Y(\partial M,[g|_{\partial M}])$: then
	\[\tilde{S}^{\tilde{\varphi},\partial M}=\hat{S}^{\partial M}\frac{\mathrm{Vol}_{\tilde{g}_b}(M)^{\frac{m-3}{m-1}}}{\mathrm{Vol}_{\hat{g}}(M)^{\frac{m-3}{m-1}}},\]
	where
	\[\frac{\mathrm{Vol}_{\tilde{g}_b}(M)^{\frac{m-3}{m-1}}}{\mathrm{Vol}_{\hat{g}}(M)^{\frac{m-3}{m-1}}}\]
	is constant.
	Moreover, if equality holds
	\[\int_{\partial M}\tilde{H}^2=0.\]
	which implies that the boundary is totally geodesic.
	Furthermore, we have that $\tilde{S}^{\tilde{\varphi}}=\tilde{S}^{\tilde{\varphi}}_{min}$ on $\partial M$, that is $\tilde{S}^{\tilde{\varphi}}$ is constant on $\partial M$. Moreover, $\tilde{\varphi}^a_m\equiv0$ on $\partial M$.
	
\end{proof}

\begin{rem}
	Observe that the proof of Theorem \eqref{t-vol est} is completely independent from the choice of $\Lambda$. For instance, when $\varphi$ is constant and $\Lambda$ assumes the values discussed in the introduction, we have the validity of the theorem for some well-known models (see the discussion in the introduction).
\end{rem}

\noindent{\bf Data availability statement}

\noindent Data sharing not applicable to this article as no datasets were generated or analysed during the current study.

	\ackn{The first and second authors are members of the	Gruppo Nazionale per le Strutture Algebriche, Geometriche e loro Applicazioni 	(GNSAGA) of INdAM (Istituto Nazionale di Alta Matematica). The authors are partially funded by 2022 PRIN project 20225J97H5 ``Differential Geometric Aspects of Manifolds via Global Analysis''.}

	\bibliographystyle{amsplain}
	\bibliography{bibliographyBMR}

\end{document}